\newcounter{contador}
\newcounter{contador2}
\newtheorem{proposition}[contador]{Proposition}
\newtheorem{theorem}[contador]{Theorem}
\newtheorem{lemma}[contador]{Lemma}
\newtheorem{remark}[contador]{Remark}
\newcommand{\C}{{\mathbb C}}
\newcommand{\N}{{\mathbb N}}
\newcommand{\pr}{{P}{\mathbb C^2}}
\newcommand{\pru}{{P}{\mathbb C^1}}
\newcommand{\Z}{{\mathbb Z}}
\newcommand{\homog}{[x_0:x_1:x_2]}
\title{Zero entropy for some birational maps of $\C^2$\footnote{{\bf Acknowledgements}.
The first author is supported by Ministry of Economy, Industry and Competitiveness of the Spanish Government through grants MINECO/FEDER MTM2016-$77278$ and also supported by the grant $2014$-SGR-$568$ from AGAUR, Generalitat de Catalunya. The GSD-UAB Group is supported by the Government of Catalonia through the SGR program. It is also supported by MCYT through the grant MTM$2008-03437.$}}
\author{Anna Cima$^{(1)}$ and Sundus Zafar$^{(1)}$
  \\*[.1truecm]
{\small \textsl{$^{(1)}$ Dept. de Matem\`{a}tiques, Facultat de
Ci\`{e}ncies,}}
\\*[-.25truecm] {\small \textsl{Universitat Aut\`{o}noma de Barcelona,}}
\\*[-.25truecm] {\small \textsl{08193 Bellaterra, Barcelona, Spain}}
\\*[-.25truecm] {\small \textsl{cima@mat.uab.cat,
sundus@mat.uab.cat}}}
\date{}
\begin{document}
\maketitle

%Suggest a short title for your contribution

%\headline{Cima, Zafar}{algebraic entropy of k-periodic
%non-autonomous Lyness recurrences} \setcounter{section}{0}
%\setcounter{equation}{0}

\begin{abstract}
This work deals with a special case of family of birational maps $f:\C^2 \to \C^2$ dynamically classified in \cite{CZ2}. In this work we study the zero entropy sub families of $f$. The sequence of degrees $d_n$ associated to the iterates of $f$ is found to grow periodically, linearly, quadratically or exponentially. Explicit invariant fibrations for zero entropy families and all the integrable and periodic mappings inside the family $f$ are given.

\end{abstract}

\noindent {\sl  Mathematics Subject Classification 2010:} 14E05,
26C15, 34K19, 37B40, 37C15, 39A23, 39A45.

\noindent {\sl Keywords:} Birational maps, Algebraic entropy, First
Integrals, Fibrations, Blowing-up, Integrability, Periodicity, Chaos.
%This is done by finding the
%dynamical degree $\delta(f)$ of the degenerate and non degenerate
%cases of $f$ in projective plane. From this we find the families
%whose sequence of degrees grow periodically, linearly, quadratically
%or grow exponentially.
%We investigate two major subfamilies namely degenerate and non
%degenerate mappings.
%Generically $\delta(f) =2.$ and several subfamilies have $\delta(f) = 1.$

%\tableofcontents

\section{Introduction}

%\centerline{{\includegraphics[scale=0.6]{p2.jpg}}} In this work we
Consider the family of fractional maps $f:\C^2 \to \C^2$ of the
form:
\begin{equation}
\label{eq1}
 f(x,y) = \left( {\alpha _0} + {\alpha _1}x + {\alpha
_2}y,\frac{{\beta _0} + {\beta _1}x + {\beta _2}y}{{\gamma _0} +
{\gamma _1}x + {\gamma _2}y} \right)\,,\,(\gamma_1,\gamma_2)\ne(0,0)
\end{equation}
where the parameters $\alpha_i,\, \beta_i,\, \gamma_i,\,\, i \in \{0,1,2\}$ are complex numbers.

In this work the family of mappings $f(x,y)$ in (\ref{eq1}) is
required to be birational in general. The values of parameters
$\alpha_i,\, \beta_i,\, \gamma_i,\,\, i \in \{0,1,2\}$ for which
$f(x,y)$ is a birational mapping is discussed in Lemma
\ref{conditions} in this article. The study of the dynamics
generated by birational mappings in the plane and their
classification is a well discussed topic in recent years as can be
found in \cite{ADMV,BK2,BK3,CGM2,CGM3,JD,DS,LG,LKP,LGK,LKMR,PRo,Ro1,ZE}. The family of
mappings $f(x,y)$ in (\ref{eq1}) is dynamically classified in
\cite{CZ2}. In this work we are going to study the mapping $f$ in
case where it shows a kind of degenerate behavior for general values
of parameters.

For a birational map $f(x,y)$ the sequence of degrees $d_n$ of the iterates of $f$ satisfies a homogeneous linear recurrence, see \cite{DF}. This is governed by the characteristic polynomial $\mathcal{X}(x)$ of a certain matrix associated to $F,$ where $F:{\pr} \to {\pr}$ is the extension of $f:\C^2 \to \C^2$ in the projective plane $\pr.$ This further provides information regarding the quantity called as the \textit{dynamical degree of $F$} and defined as
\begin{equation}\label{eq.1}
\delta(F): = \mathop {\lim }\limits_{n \to \infty } {\left( {\deg
        ({F^n})} \right)^{\frac{1}{n}}},
\end{equation}
%see \cite{BK1,BK2,BK3,BV,JD,DF}.
where $F^n$ represents the iterates of $F.$ The logarithm of $\delta(F)$ is the \textit{algebraic entropy of $F$}, see \cite{BK2,BK3,BV,JD,DF}.

Considering the embedding $(x_1,x_2)\in\C^2 \mapsto [1:x_1:x_2] \in
{\pr}$ into projective space, the induced map $F:{\pr} \to {\pr}$
has three components $F_i[x_0:x_1:x_2]\,,\,i=1,2,3$ which are
homogeneous polynomials as $F[{x_0}:{x_1}:{x_2}] = [F_1\homog:F_2\homog:F_3\homog],$ where
\begin{equation}
    \label{EQ2}
    \begin{array}{l}
        F_1\homog=x_0({\gamma _0}x_0 + {\gamma _1}x_1 + {\gamma _2}x_2),\\
        F_2\homog=({\alpha _0}x_0 + {\alpha _1}x_1 + {\alpha _2}x_2)({\gamma_0}x_0 + {\gamma _1}x_1 + {\gamma _2}x_2),\\
        F_3\homog=x_0({\beta _0}x_0 + {\beta _1}x_1 + {\beta _2}x_2).
    \end{array}
\end{equation}

The map $F$ has degree two as the components of $F$ do not have common factors for general values of the parameters. Similarly the degree of each iterate of $F$ can be found in general by iterating $F$ and removing the common homogenous components as ${F^n} = F \circ \cdot \cdot \cdot  \circ F$ for each $n\in\N.$

Birational mappings $F:{\pr} \to {\pr}$ have an indeterminacy set
${\mathcal{I}(F)}$ of points where $F$ is ill-defined as a
continuous map. Hence they also have a set of curves which are sent
to a single point called the {\it exceptional locus} of $F$ denoted
as $\mathcal{E}(F).$ Generically the mappings of the form
(\ref{eq1}) have three indeterminacy points. The exceptional locus
is formed by three straight lines, each two of them intersecting on
a single indeterminate point of $F$. We call them as \textit{non
degenerate mappings}. However in some cases exceptional locus is
formed by only two straight lines. In this case these mappings are
identified as {\it degenerate mappings}. Lemma \ref{conditions} in
preliminary results section discusses the conditions for
birationality and degeneracy of family $f$ in (\ref{eq1}). This
study includes all the subfamilies of $f(x,y)$ where it shows a
degenerate behavior. The cases where exceptional locus is formed by
three straight lines are discussed and studied in the papers
\cite{CZ1} and \cite{CZ2}. Such cases are recognized as \textit{non
degenerate mappings}.

The first goal of this study is look for sequence of degrees $d_n.$
This is done by performing a series of blow-up's in order to find
the characteristic polynomial which determines the behaviour of
$d_n.$

The second goal is to identify for which values of the parameters
these mappings have zero algebraic entropy and extract dynamical
consequences. For this we use the results of Diller and Favre, see
\cite{DF}, which characterize the growth rate of $d_n$ with the
existence of invariant fibrations. We find all the prescribed
invariant fibrations in each one of this cases. We emphasize which
elements of the family are integrable mappings. We also distinguish
all the periodic mappings giving a pair of first integrals
generically transverse.

The article is organized as follows: Section two is devoted to give
some preliminary results on birational maps and family $f$, in order to
describe the blow-up process and the Picard group. In Section three we study
 the subfamily $\alpha_1\gamma_2-\alpha_2\gamma_1=0,$ while in Section four we study
 the subfamily $\alpha_1\beta_2-\alpha_2\beta_1=0.$

The results that we get are the following. We have named Theorem the
results on the dynamical degree and the growth of $d_n$ and
Proposition the results on the zero entropy and existence of
invariant fibrations. In this way in Section 3 we give Theorem
\ref{CD2} with Proposition \ref{zeroentropyone} concerning the
family $\alpha_1\gamma_2-\alpha_2\gamma_1=0.$ Section 4 which deals
with mappings satisfying $\alpha_1\beta_2-\alpha_2\beta_1=0,$ is
splitted in three subsections. We present Theorem \ref{CD3} with
Proposition \ref{zeroentropytwo} when $\gamma_1\gamma_2\ne 0,$
Theorem \ref{gamma1} with Proposition \ref{zeroentropythreee}
($\alpha_2\ne 0$) and Proposition \ref{zeroentropyfourr} ($\alpha_2=0$) when
$\gamma_1=0$ and Theorem \ref{gamma2} with Proposition
\ref{zeroentropyfive} when $\gamma_2=0.$

\section{Preliminary results}

Consider the mapping $F[x_0:x_1:x_2]:{\pr} \to {\pr}$ in (\ref{EQ2}), then the exceptional locus of $F[x_0:x_1:x_2]$ is given as
${\mathcal{E}(F)}\,=\,\{S_0,S_1,S_2\},$ where
\begin{equation*}
S_0=\{x_0=0\},\quad S_1=\{\gamma_0 x_0+\gamma_1 x_1+\gamma_2
x_2=0\},
\end{equation*}
\begin{equation*}
S_2=\{\left(\alpha_1
(\beta\gamma)_{02}-\alpha_2 (\beta\gamma)_{01}\right)\,x_0+\alpha_1
(\beta\gamma)_{12} x_1+\alpha_2 (\beta\gamma)_{12} x_2=0\}.
\end{equation*}
We have used the notation:
$(\delta\epsilon)_{ij}=\delta_i\epsilon_j-\delta_j\epsilon_i.$ The
exceptional locus of $F^{-1}[x_0:x_1:x_2]$ is
${\mathcal{E}(F^{-1})}\,=\,\{T_0,T_1,T_2\},$ where
$$\begin{array}{l}
T_0=\left\{\left(\gamma_0 (\alpha\beta)_{12}-\gamma_1 (\alpha\beta)_{02}+\gamma_2 (\alpha\beta)_{01}\right)x_0-(\beta\gamma)_{12} x_1=0\right\},\\
T_1=\{(\alpha\beta)_{12} x_0-(\alpha\gamma)_{12} x_2=0\},\quad\quad
T_2=\{x_0=0\}.
\end{array}$$

The birational map $F[x_0:x_1:x_2]$ has an indeterminacy set
${\mathcal{I}(F)}$ of points where $F$ is ill-defined as a
continuous map. This set is given by:
$$\{\homog\in\pr:F_1\homog=0,F_2\homog=0,F_3\homog=0]\},$$ which
gives:
$${\mathcal{I}(F)}\,=\,\{O_1,O_2,O_3\},$$ where
$$\begin{array}{l}
O_0=[(\beta\gamma)_{12}:(\beta\gamma)_{20}:(\beta\gamma)_{01}],\\
O_1=[0:\alpha_2:-\alpha_1],\\
O_2=[0:\gamma_2:-\gamma_1],
\end{array}$$
and $(\beta\gamma)_{ij}:=\beta_i\gamma_j-\gamma_j\beta_i$ for
$i,j=0,1,2.$

By calling $g(x,y)$ the inverse of $f(x,y)$ given in (\ref{eq1}) and  considering $G\homog$ its extension on $\pr,$ also a indeterminancy set ${\mathcal{I}(G)}$
exists i.e. ${\mathcal{I}(G)}\,=\,\{A_1,A_2,A_3\},$ where
$$\begin{array}{l}
A_0=[0:1:0],\\
A_1=[0:0:1],\\
A_2=[(\beta\gamma)_{12}\,(\alpha\gamma)_{12},(\alpha_0\,(\beta\gamma)_{12}-\alpha_1\,(\beta\gamma)_{02}+
\alpha_2\,(\beta\gamma)_{01})\,(\alpha\gamma)_{12}:(\alpha\beta)_{12}\,(\beta\gamma)_{12}].
\end{array}$$

We are interested in the birational mappings (\ref{eq1}) when the corresponding $F$ only has two distinct exceptional curves. Next lemma informs about the set of parameters which are available in this study.

Recall that a birational map is a map $ f:\C^2\rightarrow \C^2$ with
rational components such that there exists an algebraic curve $V$
and another rational map $g$ such that $f\circ g=g\circ f=id$ in
$\C^2\setminus V.$

\begin{lemma}\label{conditions}
    Consider the mappings $$f(x_1,x_2)=\left( {\alpha _0} + {\alpha
        _1}x_1 + {\alpha _2}x_2,\frac{{\beta _0} + {\beta _1}x_1 + {\beta
            _2}x_2}{{\gamma _0} + {\gamma _1}x_1 + {\gamma _2}x_2}
    \right),\,(\gamma_1,\gamma_2) \neq (0,0).$$

    Then: \begin{itemize}
        \item [(a)] The mapping $f$ is birational if
        and only if the vectors
        $(\beta_0,\beta_1,\beta_2),\,\,(\gamma_0,\gamma_1,\gamma_2)$ are
        linearly independent and
        $((\alpha\beta)_{12},(\alpha\gamma)_{12})\ne
        (0,0),\,((\alpha\gamma)_{12},(\beta\gamma)_{12})\ne (0,0),$ and
        either $((\alpha\beta)_{12},(\beta\gamma)_{12})\ne (0,0)$ or
        $(\beta_1,\,\beta_2) = (0,0).$

        \item [(b)] The mapping $f$ is degenerate if and only if $(\beta\gamma)_{12}=0$ or $(\alpha\gamma)_{12}=0.$
    \end{itemize}\end{lemma}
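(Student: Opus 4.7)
My plan is to treat parts (a) and (b) by direct computation with the defining formulas. For (a) I will construct a candidate inverse for $f$ by Cramer's rule. Viewing the two equations
\[u=\alpha_0+\alpha_1 x_1+\alpha_2 x_2,\qquad (\beta_1-v\gamma_1)x_1+(\beta_2-v\gamma_2)x_2=v\gamma_0-\beta_0\]
as a linear system in $(x_1,x_2)$ over $\C(u,v)$, the determinant is $\Delta(v)=(\alpha\beta)_{12}-v(\alpha\gamma)_{12}$. The candidate inverse produced by Cramer's rule is a bona-fide rational map exactly when $\Delta(v)\not\equiv 0$, which yields the condition $((\alpha\beta)_{12},(\alpha\gamma)_{12})\ne(0,0)$. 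In addition, $v$ must be a non-constant function of $(x_1,x_2)$, and this forces the linear independence of $(\beta_0,\beta_1,\beta_2)$ and $(\gamma_0,\gamma_1,\gamma_2)$.

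To extract the remaining two conditions I will compute the Jacobian $J$ of $(u,v)$ with respect to $(x_1,x_2)$. A short calculation gives
\[J\,(\gamma_0+\gamma_1 x_1+\gamma_2 x_2)^2=-\alpha_1(\beta\gamma)_{02}+\alpha_2(\beta\gamma)_{01}-(\beta\gamma)_{12}(\alpha_1 x_1+\alpha_2 x_2).\]
Whenever $(\beta_1,\beta_2)=s(\gamma_1,\gamma_2)$, the key identity $\alpha_1(\beta\gamma)_{02}-\alpha_2(\beta\gamma)_{01}=(\beta_0-s\gamma_0)(\alpha\gamma)_{12}$ holds, and together with the linear independence condition $J\equiv 0$ reduces to $(\alpha_1,\alpha_2)=(0,0)$ or $((\alpha\gamma)_{12},(\beta\gamma)_{12})=(0,0)$. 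Dominance therefore gives $((\alpha\gamma)_{12},(\beta\gamma)_{12})\ne(0,0)$. The fourth bullet comes out as a consistency check: if $(\beta_1,\beta_2)\ne(0,0)$ and both $(\alpha\beta)_{12},(\beta\gamma)_{12}$ vanish, then $(\alpha_1,\alpha_2),(\beta_1,\beta_2),(\gamma_1,\gamma_2)$ are pairwise collinear, forcing $(\alpha\gamma)_{12}=0$ in contradiction with the third bullet.

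For (b) I will directly inspect the $S_2$ equation. If $(\beta\gamma)_{12}=0$, writing $(\beta_1,\beta_2)=s(\gamma_1,\gamma_2)$ makes the $x_1,x_2$ coefficients of $S_2$ vanish, while the $x_0$ coefficient simplifies via the same identity to $(\beta_0-s\gamma_0)(\alpha\gamma)_{12}$; this is nonzero under the hypotheses of (a), so $S_2=S_0$. If instead $(\alpha\gamma)_{12}=0$, writing $(\alpha_1,\alpha_2)=k(\gamma_1,\gamma_2)$ collapses the $S_2$ equation to $k(\beta\gamma)_{12}(\gamma_0 x_0+\gamma_1 x_1+\gamma_2 x_2)=0$, so $S_2=S_1$. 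Conversely, when both $(\beta\gamma)_{12}$ and $(\alpha\gamma)_{12}$ are nonzero, the three coefficients of $S_2$ are not proportional to those defining $S_0$ or $S_1$, so $S_2$ is a genuinely third exceptional line. The only step I foresee requiring care is the bookkeeping of the antisymmetric brackets $(\delta\epsilon)_{ij}$, in particular the identity $\alpha_1(\beta\gamma)_{02}-\alpha_2(\beta\gamma)_{01}=(\beta_0-s\gamma_0)(\alpha\gamma)_{12}$, which is the algebraic bridge between the birationality and degeneracy criteria.
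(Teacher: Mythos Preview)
Your proposal is correct and, for part (a), follows essentially the paper's route: construct the explicit rational inverse (the paper writes it down directly, you obtain it via Cramer's rule, which is the same computation) and then examine the numerator of the Jacobian determinant of $f$; this is exactly the paper's expression $\alpha_1(\beta\gamma)_{02}-\alpha_2(\beta\gamma)_{01}+\alpha_1(\beta\gamma)_{12}x_1+\alpha_2(\beta\gamma)_{12}x_2$ up to sign. Your additional observation that the fourth condition is redundant given the third is correct and is a small improvement over the paper's statement, which lists the four conditions without noting any logical dependence among them.

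For part (b) you take a genuinely different but equally clean route. The paper argues via the indeterminacy points $A_0,A_1,A_2$ of $F^{-1}$: since each $S_i$ collapses to $A_i$, degeneracy of the exceptional locus forces two of the $A_i$ to coincide, and then one checks that $A_0=A_2$ (resp.\ $A_1=A_2$) is equivalent to $(\beta\gamma)_{12}=0$ (resp.\ $(\alpha\gamma)_{12}=0$). You instead stay with the defining equation of $S_2$ and show directly, using the identity $\alpha_1(\beta\gamma)_{02}-\alpha_2(\beta\gamma)_{01}=\beta_0(\alpha\gamma)_{12}-\gamma_0(\alpha\beta)_{12}$, that $S_2$ degenerates to $S_0$ or $S_1$ exactly under those two conditions. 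Your approach avoids the detour through the $A_i$ and makes transparent that the same bracket identity governs both the Jacobian in (a) and the collapse of $S_2$ in (b); the paper's approach has the advantage of being coordinate-free on the target side. Both proofs are short and the difference is one of taste.
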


    \begin{proof}
        The conditions in {\it (a)} are necessary for $f$ to be invertible
        as if the vectors $(\beta_0,\beta_1,\beta_2),$
        $(\gamma_0,\gamma_1,\gamma_2)$ are linearly dependent then the
        second component of $f$ is a constant, also if
        $((\alpha\beta)_{12},$ $(\alpha\gamma)_{12})= (0,0)$ or
        $((\alpha\gamma)_{12},(\beta\gamma)_{12})= (0,0)$ then $f$ only
        depends on $\alpha_1\,x_1+\alpha_2\,x_2$ or on
        $\gamma_1x_1+\gamma_2x_2.$ If
        $((\alpha\beta)_{12},(\beta\gamma)_{12})= (0,0)$ and
        $(\beta_1,\,\beta_2) \neq (0,0)$ then $f$ only depends on
        $\beta_1x_1+\beta_2x_2$.

        Now assume that conditions $(a)$ are satisfied. Then the inverse of
        $f$ which formally is
        $$f^{-1}(x,y)=\left(\frac{-(\alpha\beta)_{02}+\beta_2 x+(\alpha\gamma)_{02}y-\gamma_2 x
            y}{(\alpha\beta)_{12}-(\alpha\gamma)_{12}y},\frac{(\alpha\beta)_{01}-\beta_1
            x+(\alpha\gamma)_{10}y+\gamma_1 x
            y}{(\alpha\beta)_{12}-(\alpha\gamma)_{12}y}\right),$$ is well
        defined. Furthermore the numerators of the determinants of the
        Jacobian of $f$ and $f^{-1}$ are
        \begin{equation}\label{detf}
        \alpha_1(\beta\gamma)_{02}-\alpha_2(\beta\gamma)_{01}+\alpha_1(\beta\gamma)_{12}x+\alpha_2(\beta\gamma)_{12}y
        \end{equation}
        and
        \begin{equation}\label{detfinv}
        \alpha_0(\beta\gamma)_{12}-\alpha_1(\beta\gamma)_{02}+\alpha_2(\beta\gamma)_{01}-(\beta\gamma)_{12}y,
        \end{equation}
        respectively. It is easily seen that conditions $(a)$ imply that
        both (\ref{detf}) and (\ref{detfinv}) are not identically zero.
        Hence, $f\circ f^{-1}=f^{-1}\circ f=id$ in $\C^2\setminus V,$ where
        $V$ is the algebraic curve determined by the common zeros of
        (\ref{detf}) and (\ref{detfinv}).

        To see {\it (b)} we know that since $S_i$ maps to $A_i,$
        this implies that the points $A_0,A_1,A_2$ are not all distinct.
        Since $A_0\ne A_1$ we have two possibilities: $A_0=A_2$ or
        $A_1=A_2.$ Condition $A_0=A_2$ writes as
        $(\beta\gamma)_{12}\,(\alpha\gamma)_{12}=0$ and
        $(\alpha\beta)_{12}(\beta\gamma)_{12}=0.$ From {\it (a)}, the vector
        $((\alpha\beta)_{12},(\alpha\gamma)_{12})\ne (0,0).$ Hence
        $(\beta\gamma)_{12}$ must be zero. In a similar way it is seen that
        $A_1=A_2$ if and only if $(\alpha\gamma)_{12}=0.$
    \end{proof}

It is easy to see that $F$ maps each $S_i$ to $A_i$ and that the
inverse of $F$ maps $T_i$ to $O_i$ for $i\in \{0,1,2\}.$ To specify
this behaviour we write $F:S_i\twoheadrightarrow A_i$ (also
$F^{-1}:T_i\twoheadrightarrow O_i$). It is known that the dynamical degree depends on the orbits of $A_0,A_1,A_2$ under the action of $F$ (see Proposition of section 2). Indeed, the key point is whether the iterates of $A_0, A_1, A_2$ coincide with any of the indeterminacy  points of $F.$ When we find such orbit of iterates of $F$ that ends at some indeterminacy point of $F$ we perform a series of blow up in order to remove the indeterminacy of $F$ in the new extended space.

For $X=\{\left((x,y),[u:v]\right)\in \C^2\times\pru\,:\,xv=yu\}$ and $p\in\C^2$ we let $(X,\pi)$ to be the blowing-up of $\C^2$ at the point $p.$ By translating $p$ at the origin, $\pi^{-1}p=\pi^{-1}(0,0)=\{\left((0,0),[u:v]\right)\}:=E_p\simeq\pru$ and $\pi^{-1}q=\pi^{-1}(x,y)=\left((x,y),[x:y]\right)\in X$ for $q=(x,y)\ne (0,0).$ Every blow up gives a new expanded space $X$
and a new induced map $\tilde{F}: X \to X$ is defined on it. Indeterminacy sets and exceptional locus can
also be defined by considering meromorphic functions on
complex manifolds $X$ we get after a series of blow ups. Consider the Picard group of $X$ denoted by $\mathcal{P}ic(X),$ where $X$ is the complex manifold. For a generic line $L \in \pr$ the $\mathcal{P}ic(\pr)$ is generated by the class of $L.$ If the base points of the blow-ups are $\{p_1, p_2,\ldots ,p_k\}\subset \pr$ and $E_i:=\pi^{-1}\{p_i\}$ then it is known that $\mathcal{P}ic(X)$ is generated by $\{\hat L, E_1, E_2, \ldots ,E_k\},$ see
\cite{BK2, BK3}. The curve $\hat L$ is the {\it strict transform} of $L \in \C^2$ is the adherence of ${\pi^{-1}}{(C\setminus\{p\})},$ in the Zariski topology. Furthermore $\pi:X\longrightarrow \pr$ induces a
morphism of groups $\pi^*:\mathcal{P}ic(\pr)\longrightarrow
\mathcal{P}ic(X),$ with the property that for any complex curve
$C\subset \pr,$
\begin{equation}\label{clau}
\pi^*(C)=\hat C+\sum m_i\,E_i,
\end{equation}
where $m_i$ is the algebraic multiplicity of $C$ at $p_i.$
For  $F \in \pr,$ $\tilde F$ is denoted as  natural extension of $F$ on $X$ and it induces a  morphism of groups,
$\tilde F^*:\mathcal{P}ic(X)\rightarrow \mathcal{P}ic(X)$ by
considering the classes of preimages such that
$\tilde F^*(\hat L)\,=\,d\,\hat L\,+\,\sum_{i=1}^k c_i\,E_i\quad ,
\quad c_i\in\Z,$
where $d$ is the degree of $F.$ By iterating $F,$ we get the
corresponding formula by changing $F$ by $F^n$ and $d$ by $d_n.$
To know the behavior of the sequence of degrees $d_n$ we deal with maps $\tilde F$ such that
\begin{equation}\label{AS}
(\tilde F^{n})^*=(\tilde F^*)^n.
\end{equation}
Maps $\tilde F$  satisfying condition (\ref{AS}) are called {\it
    Algebraically Stable maps} (AS for short), (see \cite{DF}). In order to get AS maps we will use the following useful result
showed by Fornaess and Sibony in \cite{FS} (see also Theorem 1.14)
of \cite{DF}:
\begin{equation}\label{condicio}
{\text{The map $\tilde{F}$ is AS if and only if for every
        exceptional curve}\,\, C\,\, \text{and all} \,\, n\ge 0\,,\,\tilde
    F^n(C)\notin {\mathcal{I}}(\tilde F).}
\end{equation}

It is known (see Theorem 0.1 of \cite{DF}) that one can always
arrange for a birational map to be AS performing a finite number of
blowing-up's. If it is the case and we call $\mathcal
{X}(x)=x^k+\sum_{i=0}^{k-1} c_i\,x^i$ the characteristic polynomial
of $A:=(\tilde F^*),$ then since $\mathcal {X}(A)=0$ and $d_i$ is
the $(1,1)$ term of $A^i$ we get that
$d_{k}=-(c_0+c_1d_1+c_2d_2+\cdots + c_{k-1}d_{k-1}),$
i.e., the sequence $d_n$ satisfies a homogeneous linear recurrence
with constant coefficients. The dynamical degree is then the largest
real root of $\mathcal {X}(x).$ The following is a direct consequence of Theorem $0.2$ of \cite{DF}. It is quiet useful in our work. Given a birational map $F$ of $\pr,$ let $\tilde{F}$ be its regularized map so that the induced map $\tilde{F}^{*}:\mathcal{P}ic(X) \to \mathcal{P}ic(X)$ satisfies $(\tilde{F}^n)^{*} = (\tilde{F}^{*})^n.$ Then

\begin{theorem}\label{theo-diller}
    Let $F:\pr \to \pr$ be a birational map and let $d_n = deg(F^n).$ Then up to bimeromorphic conjugacy, exactly one of the following holds:
    \begin{itemize}
        \item The sequence $d_n$ grows quadratically and $\tilde{F}$ is an automorphism preserving an elliptic fibration.
        \item The sequence $d_n$ grows linearly and $\tilde{F}$ preserves a rational fibration. In this case $\tilde{F}$ cannot be conjugated to an automorphism.
        \item The sequence $d_n$ is bounded and $\tilde{F}$ preserves a two generically transverse rational fibrations and $\tilde{F}$ is an automorphism.
        \item The sequence $d_n$ grows exponentially.
    \end{itemize}
    In the first three cases $\delta(F) = 1$ while in the last one $\delta(F) > 1.$ Furthermore in the first and second, the invariant fibrations are unique.
\end{theorem}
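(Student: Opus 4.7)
The plan is to extract this as a translation of Diller and Favre's Theorem~0.2, since the authors explicitly state that this is a direct consequence. I would first set up the framework: by Theorem~0.1 of \cite{DF}, any birational map $F$ of a rational surface can be made algebraically stable by a finite sequence of blow-ups, producing $\tilde{F}$ on some surface $X$ such that $(\tilde F^{n})^{*}=(\tilde F^{*})^{n}$. I would then split into cases according to the growth type of $d_n=\deg(F^n)$, which by the recursion from the characteristic polynomial $\mathcal{X}(x)$ of $\tilde F^{*}$ discussed earlier in the preliminaries must be one of: bounded, linear, quadratic, or exponential.

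For the exponential case, the dynamical degree $\delta(F)=\lim d_n^{1/n}$ is the largest real root of $\mathcal{X}(x)$ and is strictly larger than $1$, giving the last bullet point. In the other three regimes $\mathcal{X}(x)$ has all roots of modulus $1$ so $\delta(F)=1$; here I would invoke Diller--Favre directly to read off the geometric content: quadratic growth forces $\tilde F$ to preserve a pencil of elliptic curves and to be bimeromorphically conjugate to an automorphism; linear growth forces $\tilde F$ to preserve a pencil of rational curves but prevents conjugation to an automorphism (otherwise the growth would be bounded or quadratic); bounded growth forces $\tilde F$ to be conjugate to an automorphism preserving two distinct rational fibrations, which are generically transverse because otherwise the two invariant pencils would coincide and only a one-parameter family of invariant curves would remain, which is incompatible with periodic action on $\mathcal{P}ic(X)$.

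For the uniqueness of the invariant fibration in the quadratic and linear cases, the plan is to use the fact that an invariant fibration corresponds to a class $\omega\in \mathcal{P}ic(X)\otimes \R$ with $\omega^2=0$ fixed up to scale by $\tilde F^{*}$. In the quadratic and linear regimes, $\tilde F^{*}$ has a unique (up to scalar) fixed nef class on the boundary of the positive cone, because its Jordan form contains exactly one block corresponding to the eigenvalue $1$ of size $3$ (quadratic) or $2$ (linear); this gives a unique invariant class and hence a unique invariant fibration. In the bounded case, $\tilde F^{*}$ acts by a finite-order isometry and the fixed subspace is two-dimensional, producing the two transverse invariant pencils.

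The main obstacle, in my view, is not really an obstacle here since everything is already proven in \cite{DF}; the only delicate point is making precise that the dichotomy between linear and bounded growth really corresponds to the dichotomy between non-conjugatable and automorphism, since this depends on the Jordan structure of $\tilde F^{*}$ rather than on the spectral radius alone. I would spell out that if $d_n$ were bounded but $\tilde F$ were not conjugate to an automorphism, then one could further blow up to reach a surface where the pullback action becomes periodic on $\mathcal{P}ic$, contradicting the minimality of the algebraically stable model. Modulo this check, the statement follows by assembling the pieces of Diller--Favre's classification.
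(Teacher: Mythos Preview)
The paper does not give a proof of this theorem at all: it is stated immediately after the sentence ``The following is a direct consequence of Theorem~0.2 of \cite{DF}'' and is simply quoted as a known result, with no argument supplied. Your proposal correctly identifies the source and then goes well beyond what the paper does by sketching how one would assemble the statement from the Diller--Favre classification (algebraic stability via blow-ups, the characteristic polynomial governing $d_n$, the Jordan-block analysis of $\tilde F^{*}$ for uniqueness, etc.). So in terms of approach you are aligned with the paper --- both rely entirely on \cite{DF} --- but you are supplying an outline where the paper supplies none.

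One cautionary remark on the content of your sketch: the final paragraph, where you argue that bounded $d_n$ with $\tilde F$ not conjugate to an automorphism would contradict ``minimality of the algebraically stable model,'' is not quite right as stated. Algebraically stable models are not unique or minimal in any canonical sense, and the actual argument in Diller--Favre for the bounded case goes through showing that some iterate of $\tilde F^{*}$ is the identity on $H^{1,1}$, from which one extracts an invariant K\"ahler class and hence an automorphism model. Similarly, the uniqueness of the invariant fibration in the linear and quadratic cases is proved in \cite{DF} via the structure of the isotropic fixed ray in the light cone, which is close to what you say but does not literally reduce to a single Jordan block of size $2$ or $3$. These are refinements rather than gaps, since the paper itself defers everything to \cite{DF}; if you intend to write out a self-contained proof you should follow the actual arguments there rather than the heuristics in your last two paragraphs.
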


Since we only deal with degenerate maps, we have to consider two
subfamiles: $(\beta\gamma)_{12}=0$ or $(\alpha\gamma)_{12}=0.$ We
begin with the simplest case $(\alpha\gamma)_{12}=0.$

\section{Subfamily $(\alpha\gamma)_{12}=0.$}
\begin{lemma}\label{alfagama}
Consider  birational mappings $$f(x_1,x_2)=\left( {\alpha _0} +
{\alpha _1}x_1 + {\alpha _2}x_2,\frac{{\beta _0} + {\beta _1}x_1 +
{\beta _2}x_2}{{\gamma _0} + {\gamma _1}x_1 + {\gamma _2}x_2}
\right),\,(\gamma_1,\gamma_2) \neq (0,0)$$ with the condition
$(\alpha\gamma)_{12}=\alpha_1\gamma_2-\alpha_2\gamma_1=0.$ Then
either \begin{itemize}
\item [(i)]The four numbers $\alpha_1,\alpha_2,\gamma_1,\gamma_2$
are distinct for zero. \item[(ii)] $\alpha_1=0,\gamma_1=0$ and
$\alpha_2\ne 0\ne\gamma_2.$
\item[(iii)] $\alpha_2=0,\gamma_2=0$ and
$\alpha_1\ne 0\ne\gamma_1.$
\end{itemize}
\end{lemma}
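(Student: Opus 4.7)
The plan is to combine the hypothesis $(\alpha\gamma)_{12} = \alpha_1\gamma_2 - \alpha_2\gamma_1 = 0$ with the birationality conditions from Lemma \ref{conditions}(a). Two of those conditions will do most of the work: since $((\alpha\beta)_{12},(\alpha\gamma)_{12}) \neq (0,0)$ and $(\alpha\gamma)_{12}$ vanishes, we must have $(\alpha\beta)_{12} \neq 0$; likewise $((\alpha\gamma)_{12},(\beta\gamma)_{12}) \neq (0,0)$ forces $(\beta\gamma)_{12} \neq 0$. I will use $(\alpha\beta)_{12} \neq 0$ as the main obstruction ruling out the mixed-zero patterns.

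First I would split on whether one of $\gamma_1,\gamma_2$ vanishes, using $(\gamma_1,\gamma_2) \neq (0,0)$. If $\gamma_1 = 0$, then $\gamma_2 \neq 0$ and the relation $\alpha_1\gamma_2 = \alpha_2\gamma_1 = 0$ immediately gives $\alpha_1 = 0$, placing us in case (ii); here $\alpha_2 \neq 0$ because otherwise $(\alpha\beta)_{12} = 0$. The case $\gamma_2 = 0$ is symmetric and yields case (iii).

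If instead $\gamma_1\gamma_2 \neq 0$, then $\alpha_1 = \alpha_2\gamma_1/\gamma_2$, so $\alpha_1 = 0$ if and only if $\alpha_2 = 0$. I would then rule out the simultaneous vanishing $\alpha_1 = \alpha_2 = 0$ by observing that it forces $(\alpha\beta)_{12} = 0$, contradicting the constraint established in the first paragraph. This leaves $\alpha_1,\alpha_2$ both nonzero, which together with the standing $\gamma_1\gamma_2 \neq 0$ puts us in case (i).

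The argument is essentially a bookkeeping exercise on the birationality constraints, and no step looks delicate; the only thing worth being careful about is making sure each of the three cases is closed off by an appropriate non-vanishing condition from Lemma \ref{conditions}(a), rather than being discarded by accident.
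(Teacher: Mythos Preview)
Your proposal is correct and follows essentially the same approach as the paper: both arguments extract from Lemma~\ref{conditions}(a) the fact that $(\alpha_1,\alpha_2)\ne(0,0)$ (you via $(\alpha\beta)_{12}\ne 0$), combine it with $(\gamma_1,\gamma_2)\ne(0,0)$ and the relation $\alpha_1\gamma_2=\alpha_2\gamma_1$, and read off the trichotomy. The only cosmetic difference is that the paper case-splits on which of $\alpha_1,\gamma_1$ vanishes while you split on $\gamma_1,\gamma_2$; your observation that $(\beta\gamma)_{12}\ne 0$ is true but not actually needed.
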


\begin{proof}
From Lemma \ref{conditions} we know that $(\alpha_1,\alpha_2) \neq
(0,0).$ Then if $\alpha_1$ (resp. $\gamma_1$) is zero then
$\alpha_2$ (resp. $\gamma_2$) is not and from
$\alpha_1\gamma_2-\alpha_2\gamma_1=0$ we get that $\gamma_1$ (resp.
$\alpha_1$) must be zero.
\end{proof}

\begin{theorem}\label{CD2}

Consider  birational mappings $$f(x_1,x_2)=\left( {\alpha _0} +
{\alpha _1}x_1 + {\alpha _2}x_2,\frac{{\beta _0} + {\beta _1}x_1 +
{\beta _2}x_2}{{\gamma _0} + {\gamma _1}x_1 + {\gamma _2}x_2}
\right),\,(\gamma_1,\gamma_2) \neq (0,0)$$ with the condition
$(\alpha\gamma)_{12}=\alpha_1\gamma_2-\alpha_2\gamma_1=0.$ Then the
following hold:
\begin{itemize}
\item [(i)] If $\alpha_1\ne 0,\alpha_2\ne 0, \gamma_1\ne 0$ and $\gamma_2\ne
0,$ then $\delta(F)=2.$
\item [(ii)] If $\alpha_1=\gamma_1=0$, then $\delta(F)=\delta^{*}$ and $d_{n+2}=d_{n+1}+d_n.$

\item [(iii)] If $\alpha_2=\gamma_2=0$, then $\delta(F)=1$ and $d_n=1+n.$
\end{itemize}
\end{theorem}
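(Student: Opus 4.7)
The plan is to apply the Diller--Favre regularization procedure outlined in the preliminaries. In each of the three cases I will (a) use the constraint $(\alpha\gamma)_{12}=0$ to locate the coincidences between the images $A_0,A_1,A_2$ of the exceptional curves and the indeterminacy points $O_0,O_1,O_2$; (b) decide how many blow-ups are needed so that the Fornaess--Sibony criterion (\ref{condicio}) holds, producing an algebraically stable model $\tilde F$; (c) write down the matrix of $\tilde F^{\,*}$ on $\mathcal{P}\!ic(X)$, extract its characteristic polynomial $\mathcal X(x)$, and read off $\delta(F)$ as its largest real root together with the linear recurrence satisfied by $d_n$.

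For (i), with $\alpha_1,\alpha_2,\gamma_1,\gamma_2$ all nonzero, the formulas in Section~2 collapse to $A_1=A_2=[0{:}0{:}1]$ and $O_1=O_2=[0{:}\alpha_2{:}-\alpha_1]$, so there are only two distinct images and two distinct indeterminacy points. I claim that no orbit $F^n(A_0)$ or $F^n(A_1)$ meets $\{O_0,O_1\}$; this I would check by direct substitution of the explicit coordinates of $A_0,A_1$ into $F$ and observing that the resulting points never satisfy the equations of $O_0$ or $O_1$ under the standing assumption that the four parameters are nonzero. Granted this, criterion (\ref{condicio}) applied to $F$ on $\mathbb{P}^2$ itself already yields AS, hence $F^{\,*}[L]=2[L]$, $d_n=2^n$ and $\delta(F)=2$.

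For (ii) and (iii) the condition $(\alpha\gamma)_{12}=0$ is supplemented by an extra collapse of parameters. When $\alpha_1=\gamma_1=0$ one checks $O_1=O_2=A_0=[0{:}1{:}0]$, while when $\alpha_2=\gamma_2=0$ one has $O_1=O_2=A_1=[0{:}0{:}1]$; in both cases an exceptional curve is sent into an indeterminacy point, so $F$ fails (\ref{condicio}). I would perform successive blow-ups centered on the affected orbit, using local charts $\pi^{-1}(x,y)=((x,y),[x:y])$ as described, and at each stage check whether the newly introduced exceptional divisors are themselves mapped into the (lifted) indeterminacy set. Once (\ref{condicio}) is restored, compute $\tilde F^{\,*}$ on the basis $\{\hat L, E_1,\ldots,E_k\}$ of $\mathcal{P}\!ic(X)$ using (\ref{clau}) to keep track of multiplicities. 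In case (ii) I expect the procedure to terminate with $\mathcal X(x)=x^2-x-1$, yielding $\delta(F)=\delta^{*}=(1+\sqrt{5})/2$ and the Fibonacci recurrence $d_{n+2}=d_{n+1}+d_n$; in case (iii) I expect $\mathcal X(x)=(x-1)^2$, yielding $\delta(F)=1$ and $d_n=1+n$.

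The main obstacle is the bookkeeping in cases (ii) and (iii): at each blow-up one must identify the strict transforms of the exceptional curves $S_0,S_1,S_2$, of the new divisors $E_i$, and their images under the lifted map in the correct chart, and then verify that condition (\ref{condicio}) is actually satisfied after finitely many steps, rather than merely pushed to the next stage. A secondary subtlety is distinguishing case (ii) from case (iii) dynamically: both start from a single collapse $A_i=O_1=O_2$, yet the direction of iteration (whether the blown-up divisor returns to the center or escapes along a distinct line) is what separates Fibonacci from linear growth, and this must be read carefully from the local expressions of the lifted map.
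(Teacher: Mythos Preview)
Your plan coincides with the paper's approach and your expected characteristic polynomials in all three cases are exactly what the paper obtains; in case~(i) the concrete check is simply that $F(A_0)=[0:\alpha_1\gamma_1:0]=A_0$ and $F(A_1)=[0:\alpha_2\gamma_2:0]=A_0$, so both orbits land immediately on the fixed point $A_0\notin\{O_0,O_1\}$. The ``bookkeeping'' you flag as the main obstacle turns out to be short: in each of (ii) and (iii) a \emph{single} blow-up (at $A_0=O_1$, respectively $A_1=O_1$) already yields an AS model, and the matrix of $\tilde F^{\,*}$ on $\langle\hat L,E\rangle$ is $\left(\begin{smallmatrix}2&1\\-1&-1\end{smallmatrix}\right)$ in (ii) and $\left(\begin{smallmatrix}2&1\\-1&0\end{smallmatrix}\right)$ in (iii).
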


\begin{proof}
From the hypothesis we have that:
${\mathcal{E}(F)}\,=\,\{S_0,S_1\}\,,\,{\mathcal{I}(F)}\,=\,\{O_0,O_1\},$
${\mathcal{E}(F^{-1})}\,=\,\{T_0,T_1\}$ and
${\mathcal{I}(F^{-1})}\,=\,\{A_0,A_1\}$ with
$$S_0=\{x_0=0\}\,,\,S_1=\{\gamma_0x_0+\gamma_1x_1+\gamma_2x_2=0\},$$
$$O_0=[(\beta\gamma)_{12}:(\beta\gamma)_{20}:(\beta\gamma)_{01})]\,,\,O_1=[0:\alpha_2:-\alpha_1],$$
$$T_0=\{(\beta_2(\alpha\gamma)_{01}-\beta_1(\alpha\gamma)_{12})x_0-(\beta\gamma)_{12}x_1=0\}\,,\,T_1=\,\{x_0=0\},$$
$$A_0=[0:1:0]\,,\,A_1=[0:0:1].$$
When $\alpha_1, \alpha_2, \gamma_1$ and $\gamma_2$ are non zero we
observe that $A_0\ne O_0$ and $A_0\ne O_1.$ Hence, since
$F(A_0)=[0:\alpha_1\,\gamma_1:0]=A_0$ and $F(A_1)=
[0:\alpha_2\,\gamma_2:0]=A_0$ we get that $F$ is AS. It implies that
$d_n=2^n$ and consequently $\delta(F)=2$.

To prove (ii) we observe that $\alpha_1=\gamma_1=0$ not only implies
that $(\alpha_2,\gamma _2)\ne (0,0)$ but also that $\beta_1\ne 0$
(if not $f$ would only depend on $y$ and it would not be
birational). Now $A_0=O_1\in\mathcal{I}(F)$ and we have to blow-up
this point. Let $E_0$ be the principal divisor at this point and
consider a point
 $[u:v]_{E_0}\in E_0.$
In order to extend $F$ on $E_0$ we see $[u:v]_{E_0}$ as $\lim_{t\to 0}[tu:1:tv]$ and  we are going to evaluate $F[tu:1:tv]:$
$$F[tu:1:tv]=[u(\gamma_0 u+\gamma_2 v)t:(\alpha_0 u+\alpha_2 v)(\gamma_0 u+\gamma_2 v)t:\beta_1 u+(\beta_0 u+\beta_2 v)ut].$$
Taking the limit when $t$ tends to zero we have that when $u\ne 0\,,\,\tilde{F}[u:v]_{E_0}=[0:0:1]$ while $[0:1]_{E_0}$ becomes an indeterminacy point
for $\tilde{F}.$

To know the action of $\tilde{F}$ on $S_0$ we see the point
$[0:x_1:x_2]$ as $\lim_{t\to 0}[t:x_1:x_2].$ Then for $t\to 0$ (and
$x_2\ne 0$)
$$\lim_{t\to 0}F[t:x_1:x_2]=\lim_{t\to 0}[\gamma_2x_2t:\alpha_2\gamma_2x_2^2:(\beta_1x_1+\beta_2x_2)t]= [\gamma_2x_2:\beta_1x_1+\beta_2x_2]_{E_0}.$$
The above considerations imply that
$\mathcal{I}(\tilde{F})=\{O_0,[0:1]_{E_0}\}\,,\,\mathcal{E}(\tilde{F})=\{\hat{S}_1,E_0\}$
with $\hat{S_1}\twoheadrightarrow A_1$ and $E_0\twoheadrightarrow
A_1.$ To follow the orbit of $A_1$ under $\tilde{F}$ we observe that
$A_1=[0:0:1]\in S_0$ and hence
$\tilde{F}[0:0:1]=[\gamma_2:\beta_2]_{E_0}\ne [0:1]_{E_0}$ and which
is sent to $A_1$ again giving a two-periodic orbit. It implies that
$\tilde{F}:X\longrightarrow X$ is AS. The Picard group of $X$ is
$Pic(X)=<\hat{L},E_0>$ where $L$ is a generic line of $\pr.$ Let
$\tilde{F}^*$ denote the corresponding map on  $Pic(X),$ which acts
just taking preimages. Hence $\tilde{F}^*(E_0)=\hat{S_0}.$ In order
to write $\hat{S_0}$ as a linear combination of $\hat{L},E_0$ we are
going to use (\ref{clau}). We have that
$\pi^*(S_0)=\hat{S_0}+E_0=\hat{L}$ which implies that
$\tilde{F}^*(E_0)=\hat{L}-E_0.$ Also
$\pi^*(F^{-1}(L))=\hat{F^{-1}}(L)+E_0=2\hat{L}$ which implies that
$\tilde{F}^*(\hat{L})=2\hat{L}-E_0.$ Hence the matrix of
$\tilde{F}^*$ on $Pic({X})=<\hat{L}, E_0>$  is
\begin{footnotesize}{$ \left(
                     \begin{array}{ccc}
                       2 & 1  \\
                       -1 & -1 \\

                     \end{array}
                   \right)$}\end{footnotesize}
with characteristic polynomial $z^2-z-1.$ Hence
$\delta(F)=\delta^{*}.$

To prove (iii) we observe, as before,  that $\alpha_2=\gamma_2=0$
not only implies that $(\alpha_1,\gamma _1)\ne (0,0)$ but also that
$\beta_2\ne 0$ (if not $f$ would only depend on $x$ and it would not
be birational). Now $A_1=O_1=[0:0:1]\in\mathcal{I}(F)$ and we have
to blow-up this point. To know the action of $\tilde{F}$ on $S_1$ we
see the point $[\gamma_1 x_0:-\gamma_0 x_0:\gamma_1 x_2]$ as
$\lim_{t\to 0}[\gamma_1 x_0:t-\gamma_0 x_0:\gamma_1 x_2].$ Similar
computations as before give us that each point in
$S_1\setminus\{O_0,[0:0:1]\}$ is sent to the point
$[\gamma_1:(\alpha\gamma)_{01}]_{E_1}.$ That is $\hat{S_1}$  is
still exceptional for $\tilde{F}.$

Now consider a point $[u:v]_{E_1}\in E_1.$ It is seen as $\lim_{t\to
0}[tu:tv:1].$ Then for $t\to 0,$
$$\lim_{t\to 0}F[tu:tv:1]= \lim_{t\to 0}[tu(\gamma_0 u+\gamma_1 v):t(\gamma_0 u+\gamma_1 v)(\alpha_0 u+\alpha_1 v):\beta_2 u].$$

If $\gamma_0 u+\gamma_1 v\ne 0$ and $u\ne 0$ then
$\tilde{F}[u:v]_{E_1}=[u:\alpha_0 u+\alpha_1 v]_{E_1}.$

If $\gamma_0 u+\gamma_1 v=0,$ in the above computation with
$[u:v]_{E_1}=[\gamma_1:-\gamma_0]_{E_1}$ we deal with the point
$[\gamma_1 t:-\gamma_0 t:1]\in S_1$ and we have to apply $\tilde{F}$
giving $\tilde{F}[\gamma_1 t:-\gamma_0
t:1]=[\gamma_1:(\alpha\gamma)_{01}]_{E_1}.$ We observe that
$\lim_{u\to \gamma_1,v\to -\gamma_0}
 \tilde{F}[u:v]_{E_1}=[\gamma_1:(\alpha\gamma)_{01}]_{E_1},$ that is
$\tilde{F}$ is well defined.

But if $u=0\,,\,F[0:t:1]=[0:1:0].$ It implies that $[0:1]_{E_1}\in
{\mathcal{I}}(\tilde{F}).$

We claim that after this blow-up the map $\tilde{F}$ is AS. It is so
because $S_0\twoheadrightarrow A_0$ and $A_0$ is a fixed point of
$F$ and $\hat{S_1}\twoheadrightarrow
[\gamma_1:(\alpha\gamma)_{01}]_{E_1}$ and the iterates of this point
 never coincide with $[0:1]_{E_1}.$ The Picard group of $X$ is
now $Pic(X)=<\hat{L},E_1>$ where $L$ is a generic line of $\pr,$
$\tilde{F}^*(E_1)=\hat{S_1}+E_1,$ and similar computations as the
ones of (ii) give the matrix
{$$ \left(
                     \begin{array}{ccc}
                       2 & 1  \\
                       -1 &  0\\

                     \end{array}
                   \right).$$}

The characteristic polynomial is given by $(z-1)^2.$ Hence
$\delta(F)=1.$ Furthermore, since $d_1=2$ and $d_2=3$ we get that
$d_n=1+n.$
\end{proof}

\begin{proposition}\label{zeroentropyone}
Assume that
$$f(x_1,x_2)=\left( {\alpha _0} +
{\alpha _1}x_1 + {\alpha _2}x_2,\frac{{\beta _0} + {\beta _1}x_1 +
{\beta _2}x_2}{{\gamma _0} + {\gamma _1}x_1 + {\gamma _2}x_2}
\right),\,(\gamma_1,\gamma_2) \neq (0,0)$$ with the condition
$(\alpha\gamma)_{12}=\alpha_1\gamma_2-\alpha_2\gamma_1=0,$ has zero
entropy. Then after an affine change of coordinates it can be
written as
$$f(x,y) = \left(\alpha_0 + \alpha_1\,x,\frac{\beta_0 + y}{x}\right),\quad \alpha_1 \ne 0.$$
    This map preserves the fibration $V(x,y) = x$ and this fibration is unique.

If $m(x):=\alpha_0+\alpha_1 x$ is periodic of period $p,$  that is
if $\alpha_1^p=1$ for some $p>1\,,\,\alpha_1\ne 1,$ then
$$W(x,y)=x\cdot m(x)\cdot m(m(x))\cdots  m^{p-1}(x)$$
is a first integral of $f(x,y).$
    Also when $\alpha_1=1$ and $\alpha_0=0,$ $f$ is integrable.
\end{proposition}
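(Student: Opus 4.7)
The plan is to invoke Theorem \ref{CD2} to isolate the zero-entropy branch, perform an affine normalization, and then verify the three statements about the fibration and first integrals directly.

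From Theorem \ref{CD2}, among the three sub-cases of the subfamily $(\alpha\gamma)_{12}=0$, cases (i) and (ii) give $\delta(F)=2$ and $\delta(F)=\delta^{*}>1$, respectively, both of which have positive algebraic entropy. Zero entropy therefore forces case (iii): $\alpha_2=\gamma_2=0$. Lemma \ref{conditions} together with the hypothesis $(\gamma_1,\gamma_2)\ne(0,0)$ then gives $\alpha_1\gamma_1\ne 0$ and $\beta_2\ne 0$, so
$$f(x_1,x_2)=\left(\alpha_0+\alpha_1 x_1,\frac{\beta_0+\beta_1 x_1+\beta_2 x_2}{\gamma_0+\gamma_1 x_1}\right).$$

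Next, I would propose the affine change
$$\hat x=\frac{\gamma_0+\gamma_1 x_1}{\beta_2},\qquad \hat y=x_2-\frac{\beta_1}{\gamma_1}\,x_1,$$
and verify by direct substitution that in the new coordinates $f$ takes the form $(\alpha_0'+\alpha_1\hat x,(\beta_0'+\hat y)/\hat x)$ for suitable constants $\alpha_0',\beta_0'$. The two essential features behind this choice are: the normalization by $\beta_2$ in $\hat x$ is tuned precisely to turn the coefficient of $\hat y$ in the numerator into $1$, while the shift $-(\beta_1/\gamma_1)x_1$ in $\hat y$ is tuned to cancel the $x_1$-term produced in $\hat x\cdot\hat f_2$. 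The one place where care is required is carrying out this algebra, but it is a routine linear computation.

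Once the map is in the normal form $f(x,y)=(m(x),(\beta_0+y)/x)$ with $m(x)=\alpha_0+\alpha_1 x$, the fibration $V(x,y)=x$ is evidently invariant because $f_1$ depends only on $x$ (fibers $\{x=c\}$ are sent to $\{x=m(c)\}$); its uniqueness follows from Theorem \ref{theo-diller} applied to the linear-growth case $d_n=1+n$ established in Theorem \ref{CD2}(iii). Finally, for the first-integral claims I would compute directly: assuming $\alpha_1^p=1$ with $p>1$ and $\alpha_1\ne 1$,
$$W(f(x,y))=m(x)\cdot m^2(x)\cdots m^{p-1}(x)\cdot m^p(x)=x\cdot m(x)\cdots m^{p-1}(x)=W(x,y),$$
since $m^p(x)=x$ by periodicity. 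For $\alpha_1=1,\alpha_0=0$ the first component is the identity, so $V(x,y)=x$ itself is a non-constant first integral, yielding integrability. I expect no substantive obstacle beyond the bookkeeping in the affine change of coordinates; every other assertion is immediate from previously stated theorems or a one-line calculation.
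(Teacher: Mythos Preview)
Your overall strategy matches the paper's exactly: invoke Theorem~\ref{CD2} to single out the case $\alpha_2=\gamma_2=0$, reduce by an affine conjugation to the normal form, and then check the fibration, its uniqueness via Theorem~\ref{theo-diller}, and the first integrals by direct computation. The arguments for $V(x,y)=x$, for $W$, and for the case $\alpha_1=1,\ \alpha_0=0$ are all correct.

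There is, however, a genuine error in the affine change you propose. Your choice
\[
\hat x=\frac{\gamma_0+\gamma_1 x_1}{\beta_2},\qquad \hat y=x_2-\frac{\beta_1}{\gamma_1}\,x_1
\]
does \emph{not} bring the map to the form $(\alpha_0'+\alpha_1\hat x,(\beta_0'+\hat y)/\hat x)$. The problem is that the new second component is $\hat f_2=f_2-(\beta_1/\gamma_1)f_1$, and $f_1=\alpha_0+\alpha_1 x_1$ depends on $x_1$. Hence
\[
\hat x\cdot\hat f_2=\frac{\beta_0+\beta_1 x_1+\beta_2 x_2}{\beta_2}-\frac{\beta_1(\gamma_0+\gamma_1 x_1)(\alpha_0+\alpha_1 x_1)}{\gamma_1\beta_2},
\]
which contains the quadratic term $-(\alpha_1\beta_1/\beta_2)\,x_1^2$. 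For generic $\beta_1\ne 0$ this cannot equal $\beta_0'+\hat y$, an affine expression in $x_1,x_2$. Your heuristic ``the shear cancels the $x_1$-term in $\hat x\cdot\hat f_2$'' overlooks that the shear is applied on the image side to $(f_1,f_2)$, and the resulting $x_1$-dependence from $f_1$ is then multiplied again by $\hat x$, producing a degree-two term.

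The fix is simple and is what the paper does: use a \emph{constant} translation in the second coordinate rather than a shear, namely
\[
h(x,y)=\left(\tfrac{\beta_2}{\gamma_1}\,x-\tfrac{\gamma_0}{\gamma_1},\ \tfrac{1}{\beta_2}\,y+\tfrac{\beta_1}{\gamma_1}\right),
\]
so that (in your notation) $\hat y=\beta_2\bigl(x_2-\beta_1/\gamma_1\bigr)$ depends only on $x_2$. Then the $\beta_1 x_1$ in the numerator of $f_2$ is cancelled by the term $(\beta_1/\gamma_1)\cdot\beta_2\hat x=(\beta_1/\gamma_1)(\gamma_0+\gamma_1 x_1)$ arising from the constant shift applied to $f_2$ and brought over the common denominator. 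With this correction the rest of your proof goes through verbatim.
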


\begin{proof}
From Theorem \ref{CD2} we know that the only zero entropy maps in
the family are the ones with $\alpha_2=\gamma_2=0$ and we also know
that in this case $\beta_2,\alpha_1$ and $\gamma_1$ are different
from zero. Hence we can conjugate $f(x,y)$ with
$h(x,y)=\left(\frac{\beta_2}{\gamma_1}x-\frac{\gamma_0}{\gamma_1},\frac{1}{\beta_2}y+\frac{\beta_1}{\gamma_1}\right).$
Renaming the parameters we see that the conjugate map is of the form
$$f(x,y) = \left(\alpha_0 + \alpha_1\,x,\frac{\beta_0 + y}{x}\right),\quad \alpha_1 \ne 0.$$
Clearly this map preserves the fibration $V(x,y)=x$ and this
fibration is unique from Theorem \ref{theo-diller}. If
$\alpha_1^p=1$ for some $p>1\,,\,\alpha_1\ne 1,$ then
$W(f(x,y))=W(x,y)$ and the result follows. When $\alpha_1=1$ then we
see that $f(x,y)$ is integrable if and only if $\beta_0=0.$
\end{proof}
Now we are going to consider the second subfamily.

\section{Subfamily ${\mathbf{({\mathbf\beta}{\mathbf\gamma})_{12}=0}}.$}

 We are going to consider
three different cases, depending on $\gamma_1\gamma_2\ne
0\,,\,\gamma_1=0$ and $\gamma_2=0.$ When
$(\beta\gamma)_{12}=\beta_1\,\gamma_2-\beta_2\,\gamma_1=0,$ we have
that
${\mathcal{E}(F)}\,=\,\{S_0,S_1\}\,,\,{\mathcal{I}(F)}\,=\,\{O_0,O_1\},$
${\mathcal{E}(F^{-1})}\,=\,\{T_0,T_1\}$ and
${\mathcal{I}(F^{-1})}\,=\,\{A_0,A_1\}$ with
$$S_0=\{x_0=0\}\,,\,S_1=\{\gamma_0x_0+\gamma_1x_1+\gamma_2x_2=0\}$$
$$O_0=[0:\gamma_2:-\gamma_1]\,,\,O_1=[0:\alpha_2:-\alpha_1]$$
$$T_0=\{x_0=0\}\,,\,T_1=\{(\alpha\beta)_{12}x_0-(\alpha\gamma)_{12}x_2=0\}$$
$$A_0=[0:1:0]\,,\,A_1=[0:0:1].$$

\subsection {$(\beta\gamma)_{12}=0$ with $\gamma_1\gamma_2\ne 0$}
\begin{theorem}\label{CD3}
Consider  birational mappings $$f(x_1,x_2)=\left( {\alpha _0} +
{\alpha _1}x_1 + {\alpha _2}x_2,\frac{{\beta _0} + {\beta _1}x_1 +
{\beta _2}x_2}{{\gamma _0} + {\gamma _1}x_1 + {\gamma _2}x_2}
\right),\,(\gamma_1,\gamma_2) \neq (0,0)$$ with the conditions
$(\beta\gamma)_{12}=0$ and $\gamma_1\gamma_2\ne 0.$ Then either
\begin{itemize}
\item [(i)] $\alpha_1\ne 0\ne \alpha_2$ and $\delta(F)=2$ with
$d_n=2^n$ for all $n \in \N.$

\item[(ii)] $\alpha_1=0$ and the dynamical degree is
        $\delta(F)=\delta^*$ with $d_{n+2}=d_{n+1} + d_n$ for all $n \in \N.$

\item[(iii)] $\alpha_2=0$ and
the dynamical degree is $\delta(F)=1$ with $d_n=1+n$ for all
        $n \in \N.$
\end{itemize}
\end{theorem}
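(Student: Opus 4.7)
The plan is to follow the same scheme as the proof of Theorem \ref{CD2}: inspect where the points of $\mathcal{I}(F^{-1})=\{A_0,A_1\}$ sit with respect to $\mathcal{I}(F)=\{O_0,O_1\}$, blow up only at the coincidences, check algebraic stability of the resulting $\tilde F$ via condition (\ref{condicio}), and read the dynamical degree and the recurrence for $d_n$ from the characteristic polynomial of the pullback on the Picard group. Since $\gamma_1\gamma_2\ne 0$, neither $A_0=[0:1:0]$ nor $A_1=[0:0:1]$ can equal $O_0=[0:\gamma_2:-\gamma_1]$, so the only possible collisions are $A_0=O_1\iff\alpha_1=0$ and $A_1=O_1\iff\alpha_2=0$. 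Since $(\alpha_1,\alpha_2)\ne (0,0)$, these alternatives exhaust the situation and match exactly the three cases of the theorem.

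In case (i), $\alpha_1\alpha_2\ne 0$ and neither $A_0$ nor $A_1$ lies in $\mathcal{I}(F)$. Both $A_0,A_1$ lie on the exceptional line $S_0=\{x_0=0\}$ with $S_0\twoheadrightarrow A_0$, so their forward orbits collapse to the fixed point $A_0$ after one step and never meet $\mathcal{I}(F)$; the same holds for $S_1\twoheadrightarrow A_1$. Thus $F$ is already AS in $\pr$, so $\tilde F^*$ acts on $\mathrm{Pic}(\pr)=\langle\hat L\rangle$ as multiplication by $2$, which gives $d_n=2^n$ and $\delta(F)=2$.

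For case (ii) I would mirror the computation of Theorem \ref{CD2}(ii). After blowing up $A_0=O_1$, I would write points of $E_0$ as $\lim_{t\to 0}[tu:1:tv]$ and of $S_0$ as $\lim_{t\to 0}[t:x_1:x_2]$, then evaluate $F$ to leading order in $t$ to identify the single new indeterminacy point of $\tilde F$, to see that $\hat S_0$ contracts onto a non-indeterminate point of $E_0$, and to verify that the orbit of $A_1$ is two-periodic and avoids the new indeterminacy point. With $\tilde F$ AS, applying (\ref{clau}) to $\pi^*(S_0)=\hat S_0+E_0=\hat L$ and to $\pi^*(F^{-1}L)=\widehat{F^{-1}L}+E_0=2\hat L$ gives $\tilde F^*$ the matrix $\left(\begin{smallmatrix}2&1\\-1&-1\end{smallmatrix}\right)$ with characteristic polynomial $z^2-z-1$, whence $\delta(F)=\delta^*$ and $d_{n+2}=d_{n+1}+d_n$. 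Case (iii) is the analogous argument blowing up $A_1=O_1$: evaluating $F$ along $[tu:tv:1]$ and along $\hat S_1$ shows that $\hat S_1$ remains exceptional, $\hat S_0\twoheadrightarrow A_0$ with $A_0$ a fixed point, and the Picard-group matrix becomes $\left(\begin{smallmatrix}2&1\\-1&0\end{smallmatrix}\right)$ with characteristic polynomial $(z-1)^2$; combined with $d_1=2$ and $d_2=3$ this yields $d_n=1+n$.

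The main obstacle is the explicit limit computation of $\tilde F$ on each exceptional divisor and on the strict transforms $\hat S_j$, together with the check that after a single blow-up no iterate of any exceptional curve ever lands on the (unique) new indeterminacy point of $\tilde F$, so that the Fornaess--Sibony criterion (\ref{condicio}) applies. The non-vanishing conditions that Lemma \ref{conditions} forces in each subcase are what makes these computations go through uniformly; once AS has been secured, the passage to the matrix and to the explicit formulas for $d_n$ is routine.
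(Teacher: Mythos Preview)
Your overall strategy is the paper's strategy, and case (i) is exactly right. The gap is in case (ii), where you import the detailed picture from Theorem~\ref{CD2}(ii) wholesale, and that picture does not survive the change of hypothesis from $(\alpha\gamma)_{12}=0$ to $(\beta\gamma)_{12}=0$.

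Concretely, after blowing up $A_0=O_1$ here: (a) there is \emph{no} new indeterminacy point on $E_0$; the map $\tilde F$ sends $E_0$ regularly onto $\hat T_1$, and the remaining indeterminacy set is just $\{O_0\}$; (b) $\hat S_0$ does contract to a point $[\gamma_2:\beta_2]_{E_0}$ on $E_0$ (your expectation there is fine, and it is precisely the condition $(\beta\gamma)_{12}=0$ that forces the collapse); (c) the orbit of $A_1$ is \emph{not} two-periodic: $A_1\mapsto[\gamma_2:\beta_2]_{E_0}\mapsto[\gamma_1\gamma_2:\alpha_0\gamma_2+\alpha_2\beta_2:\beta_1\gamma_2]\in T_1$ and then continues in the affine part, so your proposed check for (\ref{condicio}) fails. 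The paper closes this with a different idea: since $O_0\in S_0=T_0$ and $T_0$ is exceptional for $F^{-1}$, the only points of $T_0$ in the image of $F$ are $A_0$ and $A_1$; because $\gamma_1\gamma_2\ne 0$ forces $O_0\ne A_0$ and $O_0\ne A_1$, the point $O_0$ has no preimage at all, so no forward orbit can ever reach it. That is the step you are missing. Your matrix $\left(\begin{smallmatrix}2&1\\-1&-1\end{smallmatrix}\right)$ and the resulting recurrence are correct once AS is established.

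In case (iii) there is a smaller inaccuracy: $\hat S_1$ does \emph{not} remain exceptional after blowing up $A_1$; rather $\hat S_1\to E_1\to\hat T_1$ with no collapse and no new indeterminacy. This actually makes AS easier than you suggest, since the only remaining exceptional curve is $\hat S_0\twoheadrightarrow A_0$ with $A_0$ fixed. Your matrix and conclusion there are correct.
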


\begin{proof}

To prove $(i)$ we observe that $S_0 \twoheadrightarrow A_0$ and $S_1
\twoheadrightarrow A_1$ with $F(A_0) =
[0:\alpha_1\gamma_1:0]=A_0\notin \mathcal{I}(F)$ and $F(A_1)
        =[0:\alpha_2\gamma_2:0] =A_0\notin \mathcal{I}(F).$ Thus using
        (\ref{condicio}) we see that $F$ is AS, which implies that $d_n=2^n$
        and consequently $\delta(F)=2$.

Now consider that $\alpha_1=0.$ It implies that $\alpha_2\ne 0.$ In
this case $S_0 \twoheadrightarrow A_0  = O_1 \in \mathcal{I}(F).$
Hence we blow-up $A_0$ to obtain $E_0$. Similar computations as
before says that $\tilde{F}$ sends ${\hat{S_0}} \to E_0 \to
\hat{T_1}$ and no new indeterminacy points are created.

Now we have to follow the orbit of $A_1$ under the action of
$\tilde{F}.$ As $A_1 \in S_0$ we find that $\tilde{F}(A_1) =
[\gamma_2:\beta_2]_{E_0}$ and
$\tilde{F}[\gamma_2:\beta_2]_{E_0}=[\gamma_1\gamma_2:\alpha_0\gamma_2+\alpha_2\beta_2:\beta_1\gamma_2]\in
T_1.$ Observe that $\mathcal{I}(\tilde{F})=\{O_0\}$ and $O_0 \in S_0
= T_0.$ We know that the only points on $T_0$ which have preimages
are $A_0$ and $A_1$ which implies that if the iterates of $A_1$
reaches $O_0$ for some iterate of $F$ then $O_0$ should be equal to
either $A_0$ or $A_1.$ But the conditions on the parameters implies
that $A_0 \neq O_0 \neq A_1.$ This implies that $O_0$ has no
preimages hence the iterates of $A_1$ cannot reach $O_0.$ Hence we
see that $\tilde{F}$ is AS.

In this case $\tilde{F}^*(\hat{L})=2\hat{L}-E_0$ and
$\tilde{F}^*(E_0)=\hat{L}-E_0.$ Hence the characteristic polynomial
of the corresponding matrix is $z^2-z-1.$ It implies that the
dynamical degree is $\delta(F) = \frac{1+\sqrt{5}}{2}$ and $d_{n+2}
=d_{n+1}+d_n$ for all $n \in \N.$

Finally to see $(iii),$ since $\alpha_2=0$ we get that $\alpha_1\ne
0.$ Now we observe that $S_0$ collapses to $A_0=[0:1:0]\in S_0$ and
that $F[0:1:0]=[0:\alpha_1\gamma_1:0]=[0:1:0].$ Hence $A_0$ is a
fixed point.

The other exceptional curve $S_1 \twoheadrightarrow A_1= O_1 =
[0:0:\alpha_1]= [0:0:1]\in \mathcal{I}(F).$ Hence we have to blow-up $A_1$
obtaining  $E_1$. Similar computations as before says that $\tilde{F}$
sends ${\hat{S_1}} \to E_1 \to \hat{T_1}$ and no new indeterminacy
points are created. After this blow-up the mapping $\tilde{F}$ is
AS. And we can see that $\tilde{F}^*(\hat{L})=2\hat{L}-E_0$ and
$\tilde{F}^*(E_1)=\hat{L}.$ Hence the matrix of $\widetilde{F}^ *$ is:
\begin{equation}\left(
        \begin{array}{ccc}
        2 & 1 \\
        -1 & 0
        \end{array}
        \right).
        \end{equation}
The characteristic polynomial  is $(z-1)^2,$
and hence the dynamical degree is $1.$ Since $d_1 = 2,\,d_2 = 3$ we
get that the sequence of degrees is $d_n= 1 + n$ for all $n \in \N.$
\end{proof}
Concerning the zero entropy we see that the only case is the third
one, when $\alpha_2=0.$ The result (and the proof) we get is very
similar to the one stated in Proposition \ref{zeroentropyone}.

\begin{proposition}\label{zeroentropytwo}
Let $$f(x_1,x_2)=\left( {\alpha _0} + {\alpha _1}x_1 + {\alpha
_2}x_2,\frac{{\beta _0} + {\beta _1}x_1 + {\beta _2}x_2}{{\gamma _0}
+ {\gamma _1}x_1 + {\gamma _2}x_2} \right),\,(\gamma_1,\gamma_2)
\neq (0,0)$$ with the conditions $(\beta\gamma)_{12}=0$ and
$\gamma_1\gamma_2\ne 0$ and assume that $f(x,y)$ has zero entropy.
 Then after an affine change of coordinates it can be
written as
$$f(x,y) = \left(\alpha_0 + \alpha_1\,x,\frac{\beta_0}{x+y}\right),\quad \alpha_1 \ne 0.$$
    This map preserves the fibration $V(x,y) = x$ and this fibration is unique.
If $m(x):=\alpha_0+\alpha_1 x$ is periodic of period $p,$  that is
if $\alpha_1^p=1$ for some $p>1\,,\,\alpha_1\ne 1,$ then
$$W(x,y)=x\cdot m(x)\cdot m(m(x))\cdots  m^{p-1}(x)$$
is a first integral of $f(x,y).$
    Also when $\alpha_1=1$ and $\alpha_0=0,$ $f$ is integrable.

\end{proposition}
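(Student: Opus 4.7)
The plan is to first invoke Theorem \ref{CD3} to pin down exactly which parameter regime can have zero entropy: case (i) gives $\delta(F)=2$ and case (ii) gives $\delta(F)=\delta^{*}>1$, so zero entropy forces case (iii), i.e.\ $\alpha_2=0$ (and consequently $\alpha_1\ne 0$). Under this hypothesis, combined with $\gamma_1\gamma_2\ne 0$ and $(\beta\gamma)_{12}=\beta_1\gamma_2-\beta_2\gamma_1=0$, the vector $(\beta_1,\beta_2)$ is proportional to $(\gamma_1,\gamma_2)$, say $\beta_i=\lambda\gamma_i$ for $i=1,2$. Writing the numerator of the second component as $\beta_0+\lambda(\gamma_1x_1+\gamma_2x_2)$ and adding and subtracting $\lambda\gamma_0$ in the numerator, the rational component decomposes as
\[
\lambda+\frac{\mu}{\gamma_0+\gamma_1x_1+\gamma_2x_2},\qquad \mu:=\beta_0-\lambda\gamma_0,
\]
with $\mu\ne 0$ by birationality (Lemma \ref{conditions}).

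Next I would perform the explicit affine change of coordinates that brings $f$ into the normal form. First, the translation $y\mapsto y-\lambda$ eliminates the constant $\lambda$ in the $y$-component; second, an affine change in $x$ of the form $x\mapsto (\gamma_2/\gamma_1)x - c/\gamma_1$ (for a suitable constant $c$) converts the denominator into $\gamma_2(x+y)$, which after absorbing the scalar into $\mu$ yields $\beta_0/(x+y)$. The first component is automatically of the form $\alpha_0+\alpha_1 x$ because $\alpha_2=0$, and $\alpha_1$ is unchanged by the conjugation (it is the multiplier at the invariant direction). Renaming constants, this gives exactly $f(x,y)=(\alpha_0+\alpha_1 x,\beta_0/(x+y))$ with $\alpha_1\ne 0$.

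Invariance of $V(x,y)=x$ is immediate since the first component of $f$ depends only on $x$, so $V\circ f=m(V)$; uniqueness of this fibration is a direct consequence of the second item of Theorem \ref{theo-diller} applied to the linearly growing case $d_n=1+n$ of Theorem \ref{CD3}. For the first integral, when $\alpha_1^p=1$ with $p>1$ and $\alpha_1\ne 1$, the affine map $m(x)=\alpha_0+\alpha_1 x$ has the unique fixed point $x^{*}=\alpha_0/(1-\alpha_1)$ and satisfies $m^p=\mathrm{id}$; then
\[
W(f(x,y))=m(x)\cdot m^{2}(x)\cdots m^{p-1}(x)\cdot m^{p}(x)=m(x)\cdot m^{2}(x)\cdots m^{p-1}(x)\cdot x=W(x,y),
\]
by cyclic rotation of the factors. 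Finally, when $\alpha_1=1$ and $\alpha_0=0$, the first component is the identity in $x$, so $V(x,y)=x$ itself is a (non-trivial) first integral and $f$ is integrable.

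The main obstacle I expect is the affine normalization: one must choose the translation and linear rescaling carefully so that the resulting map has precisely the claimed form without spurious parameters, while verifying that all birationality conditions from Lemma \ref{conditions} survive. The remaining parts, once the normal form is in hand, are short direct verifications and a single appeal to Theorem \ref{theo-diller} for uniqueness of the fibration.
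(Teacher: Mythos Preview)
Your proposal is correct and follows essentially the same route as the paper, which does not give a separate proof but states that ``the result (and the proof) we get is very similar to the one stated in Proposition \ref{zeroentropyone}.'' In that analogous proof the paper likewise invokes the relevant degree-growth theorem (here Theorem \ref{CD3}) to force $\alpha_2=0$, writes down an explicit affine conjugation to reach the normal form, observes that $V(x,y)=x$ is invariant with uniqueness coming from Theorem \ref{theo-diller}, and verifies the cyclic-product first integral $W$ and the integrable case by direct computation; your decomposition of the second component as $\lambda+\mu/(\gamma_0+\gamma_1x_1+\gamma_2x_2)$ is a convenient way to motivate the conjugation but is not a different argument.
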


\subsection  {$(\beta\gamma)_{12}=0$ with $\gamma_1=0.$}
Next Theorem gives the behaviour of $d_n$ in this family. As we can
see below after affine change of coordinates these mappings are
simple, and the sequence of degrees can be deduced by elementary
methods. We have adopted this point of view in the proof of item
$(ii).$ But in the first part we have preferred the blow-up
approach. In fact some multiple blow-up's are implemented and it is
amazing to see how the method detects the different behaviours of
$d_n.$

\begin{theorem}\label{gamma1}
Consider  birational mappings $$f(x_1,x_2)=\left( {\alpha _0} +
{\alpha _1}x_1 + {\alpha _2}x_2,\frac{{\beta _0} + {\beta _1}x_1 +
{\beta _2}x_2}{{\gamma _0} + {\gamma _1}x_1 + {\gamma _2}x_2}
\right),\,(\gamma_1,\gamma_2) \neq (0,0)$$ with the conditions
$(\beta\gamma)_{12}=0$ and $\gamma_1= 0.$
\begin{itemize}
\item [(i)] Assume that $\alpha_2\ne 0.$ Then, after an affine change of coordinates $f(x,y)$ can be written as
\begin{equation}\label{EQQ1}
f(x,y)=\left(\alpha_0+\alpha_1
x+y,\frac{\beta_0}{\gamma_0+y}\right)\,,\,\alpha_1\ne 0\ne \beta_0
\end{equation}
and the following hold:
\begin{itemize}
\item[(a)] If the onedimensional mapping $h(y):=\frac{\beta_0}{\gamma_0+y}$ is not a periodic map, then the sequence of degrees is $d_n=1+n.$
\item[ (b)] If $h(y)$ is a k- periodic map and $1+\alpha_1^k+\alpha_1^{2k}+\cdots +\alpha_1^{nk}\ne 0$ for all $n\in\N,$ then $d_n=1+n$ for all $n\le k-1$
and $d_n=k$ for all $n\ge k.$
\item [(c)]  If $h(y)$ is a k-periodic map and $1+\alpha_1^k+\alpha_1^{2k}+\cdots +\alpha_1^{nk}= 0$ for some $n\in\N,$ then $d_n$ is a $(n+1)k-$periodic sequence.
\end{itemize}
\item[(ii)] Assume that $\alpha_2=0.$ Then, after an affine change of coordinates $f(x,y)$ can be written as
\begin{equation}\label{EQQ2}
f(x,y)=\left(\alpha_0+\alpha_1
x,\frac{\beta_0}{\gamma_0+y}\right)\,\,,\,\,\alpha_1\ne 0\ne \beta_0
\end{equation}
and the following hold:
\begin{itemize}
\item[(a)] If the onedimensional mapping $h(y):=\frac{\beta_0}{\gamma_0+y}$ is not a periodic map, then
$d_n=2$ for all $n\in\N.$
\item[ (b)] If $h(y)$ is a k- periodic map then $d_n$ is a
$k-$periodic sequence.
\end{itemize}
\end{itemize}
\end{theorem}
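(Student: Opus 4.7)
The plan begins by normalizing $f$. Since $(\gamma_1,\gamma_2)\ne(0,0)$ and $\gamma_1=0$ we have $\gamma_2\ne 0$; then $(\beta\gamma)_{12}=\beta_1\gamma_2-\beta_2\gamma_1=0$ forces $\beta_1=0$, so the second coordinate of $f$ reduces to the one-dimensional Möbius map $h(y)=(\beta_0+\beta_2 y)/(\gamma_0+\gamma_2 y)$, which is non-degenerate by Lemma \ref{conditions}. Conjugating by an affine map $(x,y)\mapsto(ax,by+c)$ with $c=\beta_2/\gamma_2$, $b^2\gamma_2=1$, and (when $\alpha_2\ne 0$) $a=\alpha_2 b$ simultaneously kills the $\beta_2$ coefficient in $h$, normalizes the leading coefficient of its denominator to $1$, and sets the coefficient of $y$ in the first component equal to $1$. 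This brings $f$ to the form (\ref{EQQ1}) if $\alpha_2\ne 0$ and to (\ref{EQQ2}) if $\alpha_2=0$; the nonvanishing $\tilde\beta_0\ne 0$ is equivalent to the linear independence of $(\beta_0,\beta_2)$ and $(\gamma_0,\gamma_2)$, which follows from birationality.

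For part (ii), I would argue elementarily. The map (\ref{EQQ2}) is completely decoupled, so $f^n(x,y)=(\alpha_1^n x+C_n,h^n(y))$ with $C_n=\alpha_0(1+\alpha_1+\cdots+\alpha_1^{n-1})$. Writing $h^n(y)=P_n(y)/Q_n(y)$ in lowest terms and homogenizing gives
\[
F^n=[\,x_0\tilde Q_n:\ (\alpha_1^n x_1+C_n x_0)\tilde Q_n:\ x_0\tilde P_n\,].
\]
If $h$ is not periodic, then for every $n\ge 1$ the polynomial $Q_n$ is linear in $y$ and coprime with $P_n$, and no further common factor appears; hence $d_n=2$. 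If $h$ has period $k$, then $Q_{mk}=1$ and the projective degree drops to $1$ at iterates $n=mk$, while at all other iterates $d_n=2$. Thus $(d_n)_n$ is $k$-periodic.

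For part (i), I would iterate (\ref{EQQ1}) to verify by induction that
\[
f^n(x,y)=\Bigl(\alpha_1^n x+C_n+\sum_{j=0}^{n-1}\alpha_1^{n-1-j}\,h^j(y),\ h^n(y)\Bigr),
\]
and then apply the blow-up method used in Theorems \ref{CD2} and \ref{CD3}. The indeterminacy analysis gives $S_0\twoheadrightarrow A_0\in\mathcal{I}(F)$ and $F(A_1)=A_0$, so the orbit of $A_0$ has to be followed through successive blow-ups. In case (a), $h$ being non-periodic makes the poles of the iterates $h^j$ pairwise distinct (otherwise $\infty$ would be periodic under $h$); the blow-up tower extends through a prescribed number of steps until $\tilde F$ becomes algebraically stable, and the induced action on the Picard group has characteristic polynomial $(z-1)^2$, yielding $d_n=1+n$. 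In case (b), $k$-periodicity of $h$ closes up the orbit of poles after $k$ steps, so the tower terminates with a finite Picard lattice; the hypothesis $1+\alpha_1^k+\cdots+\alpha_1^{nk}\ne 0$ is precisely what prevents further algebraic cancellations and produces the sequence $d_n=1+n$ for $n<k$, $d_n=k$ for $n\ge k$.

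Case (c) is handled algebraically: grouping the telescoping sum by residues modulo $k$ gives
\[
\sum_{j=0}^{(n+1)k-1}\alpha_1^{(n+1)k-1-j}\,h^j(y)=\Bigl(\sum_{i=0}^{n}\alpha_1^{ik}\Bigr)\sum_{r=0}^{k-1}\alpha_1^{k-1-r}\,h^r(y)=0,
\]
so $f^{(n+1)k}$ collapses to the affine degree-$1$ map $(x,y)\mapsto(\alpha_1^{(n+1)k}x+\tilde C,y)$. Because $F^{(n+1)k}$ is birational of degree $1$, the identities $F^{m+(n+1)k}=F^m\circ F^{(n+1)k}$ together with the analogous one for $(F^{(n+1)k})^{-1}$ force $d_{m+(n+1)k}=d_m$, i.e., $(d_n)$ is $(n+1)k$-periodic. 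I expect the hardest step to be case (b) of part (i): organizing the blow-up chain so that it closes exactly at step $k$, extracting the precise structure of the induced action on the Picard lattice, and confirming that the nonvanishing hypothesis on the partial geometric sums $1+\alpha_1^{k}+\cdots+\alpha_1^{nk}$ is exactly what separates cases (b) and (c).
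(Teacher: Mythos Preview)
Your plan matches the paper's proof closely: the same normalization, the same elementary argument for part~(ii), and the same blow-up strategy for part~(i), with the explicit iterate formula
\[
f^n(x,y)=\Bigl(\alpha_1^n x+C_n+\sum_{j=0}^{n-1}\alpha_1^{\,n-1-j}h^j(y),\ h^n(y)\Bigr)
\]
playing the same role in case~(c). Two points deserve sharpening to align with the actual argument. First, in case~(i)(a) the paper performs only a \emph{single} blow-up, at $A_0=O_0=[0:1:0]$; the induced map $\tilde F$ restricted to the exceptional divisor $E_0$ is literally the M\"obius map $h$, and the image of $\hat S_0$ is the point $[1:0]_{E_0}$ (corresponding to $y=\infty$). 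Algebraic stability fails precisely when this orbit on $E_0$ reaches the new indeterminacy point $[1:-\gamma_0]_{E_0}$, and a three-fixed-points argument shows this happens iff $h$ is periodic. So in~(a) there is no ``tower'': one blow-up and the $2\times2$ matrix with characteristic polynomial $(z-1)^2$ already suffice. Second, in case~(i)(b) the paper blows up the finite $h$-orbit on $E_0$ (so $k-1$ further points), obtains a cycle $\hat S_0\to E_{00}\to\cdots\to E_{0,k-2}\to\hat S_0$, and then must still track $\hat S_1\twoheadrightarrow A_1\in S_0$: the iterate $\tilde F^{nk}(A_1)$ hits the remaining indeterminacy point $O_1=[0:1:-\alpha_1]$ exactly under the geometric-sum condition you identified. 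The stated degree sequence $d_n=k$ for $n\ge k$ then comes from the characteristic polynomial $(-z)^{k-2}(z-1)^2$ of the enlarged action \emph{together with} the initial condition $d_k=d_{k-1}$, which the paper reads off directly from $f^k(x,y)[2]=y$.

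One minor omission: the normalized $\alpha_1$ equals the original $\alpha_1$, and its nonvanishing is not automatic from $\alpha_2\ne0$; it follows from birationality via Lemma~\ref{conditions}, since $(\beta\gamma)_{12}=0$ forces $(\alpha\gamma)_{12}=\alpha_1\gamma_2\ne0$.
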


\begin{proof}
We notice that since $\gamma_1=0\,,\,\gamma_2\ne 0$ we can conjugate
$f(x,y)$ with
$$\psi(x,y)=\left(\frac{\alpha_2}{\gamma_2}x,\frac{1}{\gamma_2}y+\frac{\beta_2}{\gamma_2}\right)$$
and renaiming the coefficients if necessary, we get the desired map
(\ref{EQQ1}). Now we have that
$$S_0=\{x_0=0\}\,,\,S_1=\{\gamma_0 x_0+x_2=0\}\,,\,A_0=[0:1:0]\,,\,A_1=[0:0:1],$$
and
$$T_0=\{x_0=0\}\,,\,T_1=\{x_2=0\}\,,\,O_0=[0:1:0]\,,\,O_1=[0:1:-\alpha_1].$$
Since $A_0=O_0$ we have to blow-up this point getting $E_0.$ Then
$$\tilde{F}[u:v]_{E_0}=[\gamma_0u+v:\beta_0 u]_{E_0}\,\,,\,\,[u:v]_{E_0}\ne [1:-\gamma_0]_{E_0}$$
and
$$\hat{S}_0\twoheadrightarrow [1:0]_{E_0}.$$
The point $[1:-\gamma_0]_{E_0}$ is now an indeterminacy point of $\tilde{F}.$ Hence, if $\tilde{F}^p[1:0]_{E_0}\ne [1:-\gamma_0]_{E_0}$ for all $p\in\N,$ since $\hat{S}_1\twoheadrightarrow A_1\in S_0\,\,,\,\, \tilde{F}(A_1)= [1:0]_{E_0}$ and we get that $\tilde{F}$ is AS. It can be seen that the matrix of $\tilde{F}^*:Pic(X)\rightarrow Pic(X)=<\hat{L},E_0>$ is
\begin{equation}\left(
        \begin{array}{ccc}
        2 & 1 \\
        -1 & 0
        \end{array}
        \right).
        \end{equation}
The characteristic polynomial  is $(z-1)^2,$
and hence the dynamical degree is $1.$ Since $d_1 = 2,\,d_2 = 3$ we
get that the sequence of degrees is $d_n= 1 + n$ for all $n \in \N.$

Now assume that there exists some $p\in\N$ such that
$\tilde{F}^p[1:0]_{E_0}=[1:-\gamma_0]_{E_0}.$ We claim that in this
case $\tilde{F}:E_0\rightarrow E_0$ is a $(p+2)-$periodic map. To
prove the claim we distinguish between the case $\gamma_0=0$ (which
gives a 2-periodic map and corresponds to $p=0$) and the case
$\gamma_0\ne 0.$ We have that
$$[1:0]_{E_0}\longrightarrow^{\tilde{F}^p}   [1:-\gamma_0]_{E_0}\longrightarrow^{\tilde
{F}}[0:1]_{E_0}\longrightarrow^{\tilde {F}}[1:0]_{E_0}. $$ Hence
$\tilde{F}^{p+2}$ which  in fact is a Moebius map, fixes at least
three different points. It clearly implies that  $\tilde{F}^{p+2}$
is the identity map. Since the restriction of $\tilde{F}$ at $E_0$
is exactly the map $h(y)=\frac{\beta_0}{\gamma_0+y}$  extended to
the projective line we can assert that
$\tilde{F}^p[1:0]_{E_0}=[1:-\gamma_0]_{E_0}$ if ad only of $h(y)$ is
a $(p+2)-$periodic map. Hence $(a)$ is proved.

Following the process, if
$\tilde{F}^p[1:0]_{E_0}=[1:-\gamma_0]_{E_0},$  we have to blow-up
all the points $\tilde{F}^j [1:0]_{E_0}$ for $j=0,1,\ldots,p.$ We
call $E_{0j}$ the corresponding principal divisors, getting:
\begin{equation} \label{seq}
E_{00}\longrightarrow E_{01}\longrightarrow E_{02}\longrightarrow
\cdots \longrightarrow E_{0p}.\end{equation} Calling again
$\tilde{F}$ the map at this new variety we are going to see which is
the image of $S_0$ and which is the image of $E_{0p}.$

A point of coordinate $k$ in $E_{00}$ is seen as $\lim_{t\to 0}[t:1:kt^2].$ Then, for any point in $S_0$ different from the indeterminacy points and for
$t\sim 0,$ we have that:
$$F(t,x_1,x_2)\sim [x_2 t:(\alpha_1 x_1+\alpha_2 x_2)x_2:\beta_0 t^2]=\left[\frac{t}{\alpha_1 x_1+\alpha_2 x_2}:1:\frac{\beta_0}{x_2(\alpha_1 x_1+\alpha_2 x_2)}t^2\right].$$
 Naming $T:=\frac {t}{\alpha_1 x_1+\alpha_2 x_2}$ this point looks like $\left[T:1:\frac{\beta_0(\alpha_1 x_1+\alpha_2 x_2)}{x_2}T^2\right],$ that is
$$\tilde{F}[0:x_1:x_2]=\frac{\beta_0(\alpha_1 x_1+\alpha_2 x_2)}{x_2}\in E_{00}.$$

Now consider a point of coordinate $k$ in $E_{0p}.$ This point is seen as $\lim_{t\to 0}[t:1:-\gamma_0 t+kt^2].$ Then for $t\sim 0,$
$$F[t:1:-\gamma_0 t+kt^2]\sim [kt:\alpha_1 k:\beta_0]\rightarrow_{t\to 0} [0:\alpha_1 k:\beta_0]\in S_0.$$
Hence (\ref{seq}) can be completed and we get the cycle:
$$\hat{S}_0\longrightarrow E_{00}\longrightarrow E_{01}\longrightarrow E_{02}\longrightarrow \cdots \longrightarrow E_{0p}\longrightarrow \hat{S}_0 .$$
Now since $S_1\twoheadrightarrow A_1\in S_0$ and $\tilde{F}^{p+2}$
sends $\hat{S}_0$ to itself,  it could happen that for some
$n\in\N\,,\,\tilde{F}^{n(p+2)}(A_1)=O_0,$ which still is an
indeterminacy point of $\tilde{F}.$

If it is not the case, these $\tilde{F}$ is AS. Let us to compute
the matrix of $\tilde{F}^*.$ The Picard group of $X$ is
$Pic(X)=<\hat{L},E_{00},E_{01},\ldots ,E_{0p},E_0>.$ To write
$\hat{S}_0$ and $\hat{S}_1$ as a linear combination of basis
elements, we are going to use the identity (\ref{clau}). For
instance $\pi^*(F^{-1}(L))=\hat{F^{-1}(L)}+\sum_{j=1}^p m_jE_{0j}$
where the multiplicities $m_j$ are the order of vanishing of
$F^{-1}(L)$ at generic points of $E_{0j}.$ If
$\delta_0x_0+\delta_1x_1+\delta_2x_2=0$ is the equation of a generic
straight line $L,$ then a calculation gives $\delta_0
F[t:1:wt+kt^2][1]+\delta_1 F[t:1:wt+kt^2][2]+\delta_2
F[t:1:wt+kt^2][3]=\delta_1\alpha_1(\gamma_0+w)t+o(t^2)$ which let us
to write $\pi^*(F^{-1}(L))=\hat{F^{-1}(L)}+\sum_{j=1}^{p-1} E_{0j}+2
E_{0p}.$ Now from $\pi^*(F^{-1}(L))=2\hat{L}$ we get that
$\tilde{F}^*(\hat{L})=2\hat{L}-\sum_{j=1}^{p-1} E_{0j}-2 E_{0p}.$
Proceeding in this way we find that the matrix of $\tilde{F}^*$ is:

$$\left (
\begin{array}{ccccccc}
2&1&0&0&\hdots &0&0\\
-1&-1&1&0&\hdots &0&0\\
0&0&0&1&\hdots &0&0\\
\vdots&\vdots&\vdots&\vdots&\ddots &\vdots&\vdots\\
-1&-1&0&0&\hdots &1&0\\
-2&-1&0&0&\hdots &0&0\\
-1&-1&0&0&\hdots &0&1\\
\end{array}
\right ). $$

It is not hard to see that the characteristic polynomial of such a
matrix is $(-z)^p\,(z-1)^2.$ Hence the sequence of degrees satisfies
$d_{n+p+2}=2d_{n+p+1}-d_{n+p}$ and its behaviour depends on the
initial conditions, i.e., on the first terms
$d_1,d_2,\ldots,d_{p+2}.$ So, if $h(y)$ is $k-$periodic, then
$k=p+2$ and $f^k(x,y)[2]=y.$ It implies that $d_k=d_{k-1}.$ Since
the first degrees are $2,3,4,\ldots,k,k,$ from
$d_{n+k}=2d_{n+k-1}-d_{n+k-2}$ we get that $d_n=k$ for all $n\ge k.$
It remains to prove that the condition $\tilde{F}^{n(p+2)}(A_1)=O_0$
is equivalent to $1+\alpha_1^k+\alpha_1^{2k}+\cdots +\alpha_1^{nk}=
0$ for $k=p+2.$ To this end, taking into account the terms of
maximum degree of $f^k(x,y),$ (see (\ref{condk}) below) we get that:
$$\tilde{F}^k[0:x_1:x_2]=[0:\alpha_1^{k-1}(\alpha_1x_1+x_2):x_2]$$
and hence
$$\tilde{F}^{nk}[0:0:1]=[0:\alpha_1^{k-1}(1+\alpha_1^k+\alpha_1^{2k}+\cdots
+\alpha_1^{(n-1)k}:1].$$ Therefore
$\tilde{F}^{nk}[0:0:1]=O_1=[0:1:-\alpha_1]$ if and only if
\begin{equation}\label{condnk}
1+\alpha_1^k+\alpha_1^{2k}+\cdots +\alpha_1^{nk}=0.\end{equation}
Statement $(b)$ is now proved. To see $(c)$ we just compute
$f^{(n+1)k}.$ In this case since $\alpha_1\ne 1$ we can consider
(doing a translation if necessary) that $\alpha_0=0.$ Now the
expression of $f^k$ is
\begin{equation}\label{condk}
f^k(x,y)=\left(\alpha_1^k x+\alpha_1^{k-1} y + \alpha_1^{k-2}
h(y)+\alpha_1^{k-3}
h^2(y)+\cdots+\alpha_1h^{k-2}(y)+h^{k-1}(y),y\right)\end{equation}
Hence:
$$f^{(n+1)k}(x,y)=\left(\alpha_1^{(n+1)k} x+(1+\alpha_1^k+\alpha_1^{2k}+\cdots +\alpha_1^{nk})(\alpha_1^{k-1} y + \alpha_1^{k-2}
h(y)+h^{k-1}(y)),y \right).$$ Then since condition (\ref{condnk})
implies that $\alpha_1^{(n+1)k}=1$ we have that when condition
(\ref{condnk}) is satisfied, $f$ is a $(n+1)k-$periodic map, and
hence also the sequence of degrees is $(n+1)k-$periodic.

We are going to prove $(ii).$ First of all,  $\gamma_1=0$ implies
$\gamma_2\ne 0,$ and from $\gamma_1\beta_2-\gamma_2\beta_1=0$ we get
$\beta_1=0.$ Doing a translation on $y$ and renaiming the
coefficients we get equation (\ref{EQQ2}). Since this map is very
simple we are going to prove the result on the behaviour of $d_n$
using simple arguments. We observe that the first component of
$f^k(x,y)$ is $a_k x+b_k$ for certains $a_k,b_k.$ And the second
components are just the iterates of
$h(y)=\frac{\beta_0}{\gamma_0+y},$ a one-dimensional M{\"o}bius map.
We claim that if $h(y)$ is not a periodic map then $h^k(y)$ is a
M{\"o}bius map with non-constant denominator for all $k\in\N$ and
also that the denominators of $h^i(y)$ and $h^j(y)$ are different
for $i\ne j.$ From the claim we can deduce that when $h(y)$ is not a
periodic map then $d_n=2$ for all $n\in\N.$ And when $h(y)$ is a
$k-$periodic map, then the sequence of degrees is $d_n=2$ for all
$n$ which is not a multiple of $k$ and $d_{n}=1$ when $n$ is a
multiple of $k.$

To prove the claim we consider $N_k$ and $D_k$ with
        $h^k(z)=\frac{N_k}{D_k}$ and we see that, if we don't perform
        simplifications, $N_{k+1}=\beta_0\,D_k$ and
        $D_{k+1}=\gamma_0\,D_k+N_k.$ Let $p_k,q_k\in\C$ such that
        $D_k=p_k+q_k\,z.$ Then,
        $D_{k+2}-\gamma_0\,D_{k+1}-\beta_0\,D_k=0,$ which implies
        that $q_{k+2}-\gamma_0\,q_{k+1}-\beta_0\,q_k=0.$ Analyzing
        this linear recurrence with constant coefficients and taking
        into account that this sequence is $k-$periodic if and only
        if $\left(\frac{\lambda_2}{\lambda_1}\right)^k=1,$ where
        $\lambda_1,\lambda_2$ are the two different roots of $\lambda^2-\gamma_0\,\lambda-\beta_0=0$
        (see \cite{CGM1}), the claim follows.

\end{proof}

\begin{proposition}\label{zeroentropythreee}
Consider the birational mappings

\begin{equation}\label{zeroentropythree}
f(x,y)=\left(\alpha_0+\alpha_1
x+y,\frac{\beta_0}{\gamma_0+y}\right)\,\,,\,\,\alpha_1\ne 0\ne
\beta_0.
\end{equation}
Then the following hold:
\begin{itemize}
\item[(a)] If the onedimensional mapping $h(y):=\frac{\beta_0}{\gamma_0+y}$ is not a periodic map, then
$f(x,y)$ has the unique invariant fibration $V_1(x,y)=y.$
\item[ (b)] If $h(y)$ is a k- periodic map and $1+\alpha_1^k+\alpha_1^{2k}+\cdots +\alpha_1^{nk}\ne 0$ for all $n\in\N,$
then  $f(x,y)$ is integrable being
$$H_1(x,y)=y+h(y)+h(h(y))+\cdots +h^{k-1}(y)$$
a first integral and also has a second invariant fibration
$V_2(x,y):$
\begin{itemize}
\item[ ($b_1$)] If $\alpha_1^k\ne 1$ we can assume that $\alpha_0=0$ and
then $V_2(x,y)=$
\begin{equation}\label{periodic}(\alpha_1^k-1) x+\alpha_1^{k-1} y + \alpha_1^{k-2}
h(y)+\alpha_1^{k-3}
h^2(y)+\cdots+\alpha_1h^{k-2}(y)+h^{k-1}(y)\end{equation} satisfies
$V_2(f(x,y))=\alpha_1V_2(x,y).$
\item[ ($b_2$)] If $\alpha_1^k=1$ but $\alpha_1\ne 1$ we can assume that $\alpha_0=0$ and
then $V_2(x,y)=$
\begin{footnotesize}$$\frac{kx+(k-1)\alpha_1^{k-1}y+(k-2)\alpha_1^{k-2}h(y)+(k-3)\alpha_1^{k-3}h(h(y))+\cdots +2\alpha_1^{2}h^{k-3}(y)+\alpha_1h^{k-2}(y)}
{\alpha_1^{k-1}y+\alpha_1^{k-2}h(y)+\alpha_1^{k-3}h(h(y))+\cdots
+\alpha_1h^{k-2}(y)+h^{k-1}(y)}$$\end{footnotesize}satisfies
$V_2(f(x,y))=V_2(x,y)+1.$
\item[ ($b_3$)] If $\alpha_1=1$ then
\begin{footnotesize}$$V_2(x,y)=\frac{kx+(k-1)y+(k-2)h(y)+(k-3)h(h(y))+\cdots +2h^{k-3}(y)+(y)}
{k\alpha_0+\alpha_1^{k-1}y+\alpha_1^{k-2}h(y)+\alpha_1^{k-3}h(h(y))+\cdots
+h^{k-2}(y)+h^{k-1}(y)}$$\end{footnotesize}satisfies
$V_2(f(x,y))=V_2(x,y)+1.$
\end{itemize}

\item [(c)]  If $h(y)$ is a k-periodic map and $1+\alpha_1^k+\alpha_1^{2k}+\cdots +\alpha_1^{nk}= 0$ for some $n\in\N,$ then
$f(x,y)$ has a second first integral $H_2(x,y)$ which can be given
by $H_2(x,y))=V_2^{(n+1)k}(x,y)$ being $V_2(x,y)$ be defined by
(\ref{periodic}).
\end{itemize}
\end{proposition}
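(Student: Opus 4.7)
The plan is to verify each statement by direct computation, using the explicit expression of $f^k(x,y)$ established in the proof of Theorem~\ref{gamma1} together with the dichotomy of Theorem~\ref{theo-diller}.

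For (a), the second component of $f$ depends only on $y$, so the foliation $\{y=c\}$ is trivially invariant (the fiber $y=c$ is mapped to $y=h(c)$), giving the fibration $V_1(x,y)=y$. Since $h$ is not periodic, Theorem~\ref{gamma1}(i)(a) gives $d_n=1+n$, i.e.\ linear growth, and then Theorem~\ref{theo-diller} guarantees uniqueness of the invariant fibration.

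For (b), the first integral $H_1$ is immediate: the $k$-periodicity $h^k=\mathrm{id}$ makes
$$H_1\circ f = h(y)+h^2(y)+\cdots+h^{k-1}(y)+h^k(y) = H_1(x,y)$$
by telescoping. To build the second invariant I would reuse formula~(\ref{condk}),
$$f^k(x,y)=\Bigl(\alpha_1^k\,x+\sum_{j=0}^{k-1}\alpha_1^{k-1-j}h^j(y),\,y\Bigr),$$
which I would verify by induction on $k$. In case $(b_1)$ with $\alpha_1^k\ne 1$, the translation that kills $\alpha_0$ puts $f$ in the form $(\alpha_1 x+y,h(y))$; substituting into the proposed $V_2$ shifts the sum $\sum \alpha_1^{k-1-j}h^j(y)$ by one index and produces a trailing $h^k(y)=y$ term, and comparing with $\alpha_1 V_2$ the identity $V_2\circ f=\alpha_1 V_2$ drops out after cancellation. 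In cases $(b_2)$ and $(b_3)$ the coefficient $\alpha_1^k$ becomes $1$, so the first component of $f^k$ is of the form $x+S(y)$; this prevents a multiplicative semi-invariant and forces $V_2$ to be a quotient whose denominator is (a shift of) $S(y)$ and hence already $f$-invariant, while the numerator carries the additive part. Expanding $V_2\circ f$ and using $\alpha_1^k=1$ together with $h^k=\mathrm{id}$ should yield $V_2\circ f=V_2+1$; the case $\alpha_1=1$ is handled identically except that $\alpha_0$ can no longer be removed, and this is why $k\alpha_0$ appears in the denominator of $(b_3)$.

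For (c), the hypothesis $1+\alpha_1^k+\cdots+\alpha_1^{nk}=0$ forces $\alpha_1^k$ to be a primitive $(n+1)$-th root of unity, so in particular $\alpha_1^k\ne 1$ and the construction of $V_2$ from $(b_1)$ applies. From $V_2\circ f=\alpha_1 V_2$ I iterate to obtain
$$V_2^{(n+1)k}\circ f=\alpha_1^{(n+1)k}\,V_2^{(n+1)k}=V_2^{(n+1)k},$$
so $H_2:=V_2^{(n+1)k}$ is a first integral, generically transverse to $H_1$ since $V_2$ depends on $x$ while $H_1$ depends only on $y$. The main obstacle I expect is the algebraic bookkeeping in $(b_2)$ and $(b_3)$: matching numerators and denominators of the proposed quotient with $V_2\circ f=V_2+1$ requires careful tracking of indices shifted by one under $h$ and repeated application of $\alpha_1^k=1$, together with a check that the denominator is not identically zero.
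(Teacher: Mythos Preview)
Your proposal is correct and follows essentially the same approach as the paper, which simply states that the proofs are straightforward verifications obtained by considering combinations of $x,y,h(y),\ldots,h^{k-1}(y)$ or quotients of them. Your write-up is in fact considerably more explicit than the paper's, correctly invoking Theorem~\ref{gamma1}(i)(a) together with Theorem~\ref{theo-diller} for uniqueness in (a), the telescoping argument for $H_1$, formula~(\ref{condk}) for the structure of $f^k$, and the iteration $V_2\circ f=\alpha_1 V_2\Rightarrow V_2^{(n+1)k}\circ f=\alpha_1^{(n+1)k}V_2^{(n+1)k}$ in (c). One small inaccuracy: in (c) the hypothesis only forces $\alpha_1^k$ to be an $(n+1)$-th root of unity different from $1$, not necessarily a \emph{primitive} one (primitivity would require $n$ to be minimal), but since all you need is $\alpha_1^{(n+1)k}=1$ this does not affect the argument.
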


The proofs are straightforward. Only say that to find the fibrations
we have considered combinations of $x,y,h(y),h(h(y)),\ldots,
h^{k-1}(y)$ or quotients of them.

\begin{remark}
Assuming the hypothesis $(b),$ since $d_n$ is a bounded sequence and
$f(x,y)$ is not a periodic map, from \cite{BD} we know that $f(x,y)$
is birationally equivalent to either $(x,y)\to (ax,by)$ where $a$ is
a root of unity and $b$ it is not or to $(x,y)\to (ax,y+1).$ The
fibrations encountered in $(b)$ let us to construct such a
conjugations. In fact, when $V_2(f(x,y))=\alpha_1V_2(x,y)$ we are in
the first case while when $V_2(f(x,y))=V_2(x,y)+1$ we are in the
second one.
\end{remark}

The invariant fibrations and first integrals corresponding to the
mappings satisfying $(ii)$ of Theorem \ref{gamma1} are very easy
after the adequate affine change of coordinates. Next Proposition
gives this information.

\begin{proposition}\label{zeroentropyfourr}
Consider the birational mappings
\begin{equation}\label{zeroentropyfour}
f(x,y)=\left(\alpha_0+\alpha_1
x,\frac{\beta_0}{\gamma_0+y}\right)\,\,,\,\,\alpha_1\ne 0\ne
\beta_0.
\end{equation}
These mappings preserve the two generically transverse invariant
foliations $V_1(x,y)=x$ and $V_2(x,y)=y.$ Furthermore,
\begin{itemize}
\item[(a)] If $h(y)=\frac{\beta_0}{\gamma_0+y}$ is periodic
of period $k$ then
$$H_1(x,y)=y+h(y)+h(h(y))+\cdots +h^{k-1}(y)$$
is a first integral of $f(x,y).$
\item[(b)] If $m(x):=\alpha_0+\alpha_1 x$ is periodic
of period $p$ then
$$H_2(x,y)=x+m(x)+m(m(x))+\cdots +m^{p-1}(x)$$
is a first integral of $f(x,y).$
\item[(c)] If $h(y)$ and $m(x)$ are $k-$periodic then $f(x,y)$ is a
$k-$periodic mapping having two independent first integrals
$H_1(x,y)$ and $H_2(x,y)$ with $p=k.$

\end{itemize}
\end{proposition}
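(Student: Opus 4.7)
The plan is to exploit the fact that the mapping (\ref{zeroentropyfour}) is in fact a direct product: $f(x,y) = (m(x), h(y))$ with $m(x) = \alpha_0 + \alpha_1 x$ and $h(y) = \beta_0/(\gamma_0+y)$. Since the two coordinates evolve independently under iteration, one gets $f^n(x,y) = (m^n(x), h^n(y))$ for every $n \in \N$. This immediately yields the two invariant fibrations asserted in the statement: the pullback of $V_1 = x$ under $f$ is $m(x)$, a function of $V_1$ alone, so the level sets $\{x = c\}$ are permuted by $f$; symmetrically for $V_2 = y$. They are generically transverse because $dV_1 \wedge dV_2 = dx \wedge dy$ is nowhere vanishing.

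For part $(a)$ I would verify $H_1 \circ f = H_1$ by a telescoping argument. Since $H_1$ depends only on $y$ and $f$ acts on the second coordinate as $h$, one has $H_1(f(x,y)) = \sum_{j=0}^{k-1} h^{j}(h(y)) = \sum_{j=1}^{k} h^{j}(y)$, and the $k$-periodicity $h^k = \mathrm{id}$ reduces this sum back to $y + \sum_{j=1}^{k-1} h^{j}(y) = H_1(x,y)$. Part $(b)$ follows from the symmetric computation: using $m^p = \mathrm{id}$, $H_2(f(x,y)) = \sum_{j=0}^{p-1} m^{j}(m(x)) = x + \sum_{j=1}^{p-1} m^{j}(x) = H_2(x,y)$.

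For part $(c)$, if both $m$ and $h$ have period $k$, the product structure gives $f^k(x,y) = (m^k(x), h^k(y)) = (x,y)$, so $f$ is a $k$-periodic mapping. The first integrals $H_1$ and $H_2$ are functionally independent since the former depends only on $y$ and the latter only on $x$; thus $dH_1 \wedge dH_2$ is a nonzero multiple of $dy \wedge dx$ wherever $H_1'(y) \ne 0$ and $H_2'(x) \ne 0$, giving the claimed generic transversality. There is no real obstacle in this proposition: once the direct product structure of (\ref{zeroentropyfour}) is recognized, the whole statement reduces to elementary one-dimensional telescoping identities.
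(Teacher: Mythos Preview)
The paper gives no proof of this proposition; it is introduced with the remark that the situation is ``very easy after the adequate affine change of coordinates,'' and the statement is simply recorded. Your direct verification via the product structure $f=(m,h)$ is exactly the natural argument and is correct for the invariance of $V_1,V_2$ and for the telescoping identities in (a) and (b).

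One point deserves a closer look in part (c). Your independence argument relies on $H_1'(y)$ and $H_2'(x)$ not being identically zero. But for $m(x)=\alpha_0+\alpha_1 x$ with minimal period $p>1$ one necessarily has $\alpha_1\ne 1$ and $\alpha_1^{p}=1$, and then the coefficient of $x$ in $H_2(x)=\sum_{j=0}^{p-1}m^{j}(x)$ is $\sum_{j=0}^{p-1}\alpha_1^{j}=0$; hence $H_2$ is a constant and cannot serve as a second independent first integral. Your hedge ``wherever $H_1'(y)\ne 0$ and $H_2'(x)\ne 0$'' does not cover this, since that set is empty. This is really a defect in the proposition's formulation (compare Propositions~\ref{zeroentropyone} and~\ref{zeroentropytwo}, where the analogous invariant is taken as a \emph{product} $x\cdot m(x)\cdots m^{p-1}(x)$ rather than a sum), not in your method; but you should flag it rather than let the independence claim pass unchecked.
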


\subsection {$(\beta\gamma)_{12}=0$ with $\gamma_2=0$}

If $\gamma_2=0$ we know that $\gamma_1\ne 0$ and from $(\beta\gamma)_{12}=0$ we get $\beta_2=0.$ Also $\alpha_2\ne 0,$ if not $f(x,y)$ only depens on $x.$

\begin{theorem}\label{gamma2}
Consider  birational mappings

\begin{equation}\label{t12}
f(x_1,x_2)=\left( {\alpha _0} + {\alpha _1}x_1 + {\alpha
_2}x_2,\frac{{\beta _0} + {\beta _1}x_1 }{{\gamma _0} + {\gamma
_1}x_1} \right),\,(\gamma_1,\alpha_2) \neq (0,0).\end{equation}
\begin{itemize}
\item [(a)] Assume that $\alpha_1\ne 0.$ Then the dynamical degree of $F$ is $\delta(F)=\delta^*$ and $d_{n+2}=d_n+d_{n+1}.$
\item[(b)] Assume that $\alpha_1=0.$ Then after an affine change of coordinates $f(x,y)$ takes the form:
\begin{equation}\label{moebius}
f(x_1,x_2)=\left(x_2,\frac{{\beta _0}} {{\gamma _0} + x_1}
\right)\end{equation} and the dynamical degree of $F$ is $\delta(F)=1.$ Furthermore:
\begin{itemize}
\item [($b_1$)] If $h(z):=\frac{{\beta _0}} {{\gamma _0} + z}$ is not a periodic map then $d_n=2$ for all $n\in\N.$
\item [($b_1$)] If $h(z)$ is a $k-$periodic map then $d_n$ is a $2k-$periodic sequence.
\end{itemize}
\end{itemize}
\end{theorem}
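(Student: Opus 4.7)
The strategy splits naturally in two. For part (a), with $\alpha_1\ne 0$, I follow the blow-up recipe used in Theorems \ref{CD2} and \ref{CD3}; for part (b), with $\alpha_1 = 0$, I first reduce the map to the announced normal form by an explicit affine change and then compute the iterates directly, in the spirit of part (ii) of Theorem \ref{gamma1}.

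For part (a), the conditions $\beta_2 = \gamma_2 = 0$ imply $O_0 = [0:0:1] = A_1$, so $A_1\in \mathcal{I}(F)$; also $F(A_0) = [0:\alpha_1\gamma_1:0] = A_0$ and, since $\alpha_1\ne 0$, $A_0\ne O_1$. Blowing up $A_1$ once to obtain $E_1$ and writing $[u:v]_{E_1}$ as $\lim_{t\to 0}[tu:tv:1]$ shows that $E_1$ collapses generically to $A_0$, while the tangent direction $[\gamma_1:-\gamma_0]_{E_1}$ of $S_1$ at $A_1$ becomes the unique new indeterminacy of $\tilde F$. On $\hat S_1$, since $S_1\twoheadrightarrow A_1$, a transverse-perturbation expansion near a generic point shows that the ratio $u'/v'$ tends to a limit which is independent of the transverse direction and varies non-trivially with the parameter on $\hat S_1$, so $\hat S_1$ maps isomorphically onto $E_1$ and is no longer exceptional. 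Hence $\mathcal{E}(\tilde F) = \{S_0, E_1\}$, both collapsing to the fixed point $A_0\notin \mathcal{I}(\tilde F) = \{O_1, [\gamma_1:-\gamma_0]_{E_1}\}$, and $\tilde F$ is AS by (\ref{condicio}). Using (\ref{clau}) one obtains $\tilde F^{*}(\hat L) = 2\hat L - E_1$ and $\tilde F^{*}(E_1) = [\hat S_1] = \hat L - E_1$, giving characteristic polynomial $z^2 - z - 1$, so $\delta(F) = \delta^{*}$ and $d_{n+2} = d_{n+1} + d_n$.

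For part (b), with $\alpha_1 = 0$ we have $\alpha_2\ne 0$ and $f(x_1, x_2) = (\alpha_0 + \alpha_2 x_2, h(x_1))$ with $h$ a M{\"o}bius map. Conjugation by $\psi_1(u, v) = (u, (v-\alpha_0)/\alpha_2)$ brings $f$ to $(v, \alpha_0 + \alpha_2 h(u))$, and a further affine change $u\mapsto Au+B$ with $B$ chosen to annihilate the linear term in the numerator of the resulting M{\"o}bius yields the normal form $f(x, y) = (y, \beta_0/(\gamma_0+x))$. Induction on $n$ gives $f^{2k}(x, y) = (h^k(x), h^k(y))$ and $f^{2k+1}(x, y) = (h^k(y), h^{k+1}(x))$ where $h(z) = \beta_0/(\gamma_0+z)$. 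If $h$ is not periodic, each $h^k$ ($k\ge 1$) is a genuine M{\"o}bius, and writing $h^k(z) = N_k(z)/D_k(z)$ a direct homogenization shows that the three projective components of $F^n$ share no common factor, giving $d_n = 2$ for all $n\in \N$. If $h$ is $k$-periodic then $f^{2k} = \mathrm{id}$, so $d_{2k} = 1$ while intermediate iterates remain of degree $2$, yielding a $2k$-periodic sequence. In both cases $d_n$ is bounded, so $\delta(F) = 1$.

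The main obstacle is the step in part (a) establishing that a single blow-up of $A_1$ already suffices. Because $S_1$ collapses entirely to $A_1$, one must check that the transverse-direction limit on $E_1$ is transverse-independent (so that $\tilde F$ is well-defined along $\hat S_1$) and simultaneously depends non-trivially on the parameter of $\hat S_1$ (so that $\hat S_1$ maps isomorphically onto $E_1$); only then can one conclude that no further blow-up is needed to reach algebraic stability. In part (b), the only subtlety is the explicit affine normalization; the subsequent degree bookkeeping is elementary.
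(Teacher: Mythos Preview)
Your proof is correct and follows essentially the same route as the paper: in part (a) you blow up $A_1=O_0$, observe that $\hat S_1$ lifts onto $E_1$ while $E_1$ and $\hat S_0$ both collapse to the fixed point $A_0\notin\mathcal{I}(\tilde F)$, and read off the same $2\times 2$ matrix with characteristic polynomial $z^2-z-1$; in part (b) you reduce to the normal form $(y,\beta_0/(\gamma_0+x))$ and compute the iterates via $f^{2n}=(h^n,h^n)$, $f^{2n+1}=(h^n,h^{n+1})$, exactly as the paper does. The only cosmetic difference is that the paper gives the normalizing conjugation in one explicit formula, whereas you do it in two affine steps (note that your second change $u\mapsto Au+B$ must be applied simultaneously to both coordinates to preserve the form $(v,M(u))$).
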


\begin{proof}

        To prove $(a)$ we observe that now we have $S_1 \twoheadrightarrow A_1 = O_0=[0:0:1]$
        and
        $F(A_0)=[0:\alpha_1\gamma_1:0]=A_0\notin \mathcal{I}(F).$ So we have
        to blow-up $A_1=[0:0:1]$ getting $E_1.$  Then $\tilde{F}$ sends
        $S_1\rightarrow E_1\twoheadrightarrow [0:1:0]=A_0.$ Since $A_1\in
        S_1\,,\,\pi^*(S_1)=\hat{S_1}+E_1$  and the matrix of
        $\widetilde{F}^ *$ is:
        \begin{equation}\left(
        \begin{array}{ccc}
        2 & 1 \\
        -1 & -1
        \end{array}
        \right).
        \end{equation}

 Then the characteristic polynomial associated to $F$ is $z^2-z-1.$ Hence the dynamical degree
        is $\delta(F) = \delta^*$ and $d_{n+2} = d_{n+1}+d_n$ for all $n \in \N.$

We are going to prove $(b).$ When $\alpha_1=0$ (\ref{t12}) can be
transformed in (\ref{moebius}) via the conjugation
$$\psi(x,y)=\left(\frac{1}{\gamma_1}x+\frac{\alpha_0\gamma_1+\alpha_2\beta_1}{\gamma_1},\frac{1}{\gamma_1
\alpha_2}y+\frac{\beta_1}{\gamma_1}\right).$$ From (\ref{moebius})
we have that $f(f(x,y))=(h(x),h(y))$ and generally:
        \begin{equation}{\label{parells}}f^{2n}(x,y)=\left(h^n(x),h^n(y)\right)\,\,,\,\,f^{2n+1}(x,y)=\left(h^n(y),h^{n+1}(x)\right).\end{equation}

        From the same arguments as before if $h$ is not periodic $d_n=2$ for all $n\in\N.$ If $h$ is $k-$periodic then $f^{2k}(x,y)=(x,y)$ and from (\ref{parells}) we get that $d_n=2$ for all $n\in \N$ such that it is not a multiple of $2k$ and $d_n=1$ for all  $n\in \N$ such that it is a multiple of $2k.$
         In any case case the dynamical degree of $F$ is $\delta(F)=1.$

\end{proof}

\begin{proposition}\label{zeroentropyfive}
Consider the family of mappings:
$$f(x,y)=\left(y,\frac{\beta_0}{\gamma_0+x}\right).$$ Then
\begin{itemize}
\item[(a)] If $\gamma_0^2+4\,\beta_0 \neq 0,$  let $p$ and $q$ be the two different roots of $z^2-\gamma_0\,z-\beta_0 = 0,$ and let $m$ such that $m^2=q/p.$
Then $f(x,y)$  preserves the generically transverse fibrations
        $$H_1(x,y) = \frac{m^2\,p^2+mpx+p(m^2-m+1)y+xy}{(x+p)\,(y+p)},$$
        $$H_2(x,y) = \frac{m^2\,p^2-mpx+p(m^2+m+1)y+xy}{(x+p)\,(y+p)}$$
        with $H_1(f(x,y))=mH_1(x,y)\,,\,H_2(f(x,y))=-mH_2(x,y).$ Furthermore $f(x,y)$ is $2k-$ periodic if and only if $m^{2k}=1$ and in this case $H_1^{2k}(x,y)$ and $H_2^{2k}(x,y)$ are two independent first integrals of $f(x,y).$

  \item[(b)] If $\gamma_0^2+4\,\beta_0 = 0$  then it preserves the two generically transverse fibrations
        $$K_1(x,y)=\frac{\gamma_0^2-2\gamma_0\,x+6\gamma_0\,y+4\,x\,y}{(2\,x+\gamma_0)\,(2\,y+\gamma_0)}\,,\,
        K_2(x,y) = \frac{2\,\,\gamma_0\,(x+y+\gamma_0)}{(2\,x+\gamma_0)\,(2\,y+\gamma_0)},$$
with $K_1(f(x,y))=-K_1(x,y)\,,\,K_2(f(x,y))=K_2(x,y)+1.$ Furthermore $f(x,y)$ is integrable being $W(x,y)=(K_1(x,y))^2$ a first integral.

\end{itemize}
\end{proposition}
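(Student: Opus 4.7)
The strategy exploits the skew-product structure $f(x,y)=(y,h(x))$ with $h(z)=\beta_0/(\gamma_0+z)$ a Möbius map, diagonalising $h$ to produce invariant fibrations for $f$. Theorem \ref{gamma2} already establishes $\delta(F)=1$ with $d_n$ bounded, so Theorem \ref{theo-diller} guarantees the existence of two generically transverse invariant fibrations; the task is to exhibit them explicitly and extract the periodicity condition.

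For part $(a)$, the first step is to locate the fixed points of $h$: they are $-p,-q$ where $p,q$ are the two distinct roots of $z^2-\gamma_0 z-\beta_0=0$ (the discriminant $\gamma_0^2+4\beta_0$ being nonzero guarantees $p\neq q$). A direct calculation yields the factorisations $h(z)+p=p(z+p)/(\gamma_0+z)$ and $h(z)+q=q(z+q)/(\gamma_0+z)$, so the Möbius coordinate $\psi(z)=(z+q)/(z+p)$ satisfies $\psi(h(z))=(q/p)\psi(z)=m^2\psi(z)$. Writing $\psi_x=\psi(x),\psi_y=\psi(y)$, the map $f$ acts linearly on $(\psi_x,\psi_y)$ by $(\psi_x,\psi_y)\mapsto(\psi_y,m^2\psi_x)$. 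This $2\times 2$ action has eigenvalues $\pm m$ with eigen-combinations $m\psi_x\pm\psi_y$; clearing the denominator $(x+p)(y+p)$ and absorbing the factors $m\pm 1$ gives scalar multiples of the claimed $H_1$ and $H_2$, as one verifies coefficient-by-coefficient using $m^2p=q$ and $p+q=\gamma_0$. The semi-invariances $H_i(f(x,y))=\pm m\,H_i(x,y)$ then imply that $H_i^{2k}$ are first integrals exactly when $m^{2k}=1$; since in that case $f^{2k}$ fixes two functionally independent functions on an open set, it must be the identity, giving the periodicity equivalence.

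For part $(b)$, the vanishing of the discriminant forces $h$ to be parabolic with a unique double fixed point at $-\gamma_0/2$. The Möbius change of coordinate $w=1/(z+\gamma_0/2)$ conjugates $h$ to the translation $w\mapsto w+2/\gamma_0$ (using $\beta_0=-\gamma_0^2/4$ to cancel the relevant terms). Setting $u=2/(2x+\gamma_0)$, $v=2/(2y+\gamma_0)$, the map $f$ becomes the affine transformation $(u,v)\mapsto(v,u+2/\gamma_0)$. From this affine form two natural semi-invariants emerge: $u-v+1/\gamma_0$ is sent to its negative, while $\gamma_0(u+v)/2$ is shifted by $+1$. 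Pulling back to $(x,y)$ and putting everything over the common denominator $(2x+\gamma_0)(2y+\gamma_0)$ yields $K_1$ and $K_2$ as stated. Finally $W=K_1^2$ is $f$-invariant because $K_1$ is merely sign-reversed, giving integrability.

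The main obstacle I anticipate is bookkeeping rather than conceptual: matching the abstract eigen-functions to the explicit rational expressions in the statement requires careful handling of the scalar normalisations (the factors $m\pm 1$ in part $(a)$ and the factor $\gamma_0$ in part $(b)$) and systematic use of the Vieta identities $p+q=\gamma_0$, $pq=-\beta_0$, together with $m^2 p=q$. None of it is difficult, but getting the precise constants to agree with the stated formulas demands attention.
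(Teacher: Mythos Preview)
Your approach is correct and genuinely different from the paper's. The paper's own proof is a bare verification: it simply asserts that ``some calculations give'' the semi-invariances $H_i\circ f=\pm m\,H_i$ and $K_i\circ f=-K_1,\;K_2+1$, and then computes the Jacobian determinants
\[
\det\frac{\partial(H_1,H_2)}{\partial(x,y)}=-\frac{2p^2m(m^2-1)}{(p+x)^2(p+y)^2},\qquad
\det\frac{\partial(K_1,K_2)}{\partial(x,y)}=\frac{16\gamma_0^2}{(2y+\gamma_0)^2(2x+\gamma_0)^2}
\]
to confirm generic transversality. No hint is given as to where the expressions $H_i,K_i$ come from.

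You instead \emph{derive} them. Diagonalising the M\"obius map $h$ via $\psi(z)=(z+q)/(z+p)$ in the semisimple case, and via $w=1/(z+\gamma_0/2)$ in the parabolic case, turns $f$ into a linear (resp.\ affine) map of the plane whose eigen-combinations are obvious; pulling back recovers $H_i$ and $K_i$ up to the explicit scalars you identify. This is more conceptual, explains the formulas, and makes the periodicity criterion $m^{2k}=1$ transparent (since $f^2$ becomes scalar multiplication by $m^2$ in the $(\psi_x,\psi_y)$ coordinates). The only point you leave implicit that the paper checks explicitly is transversality of the two fibrations; in your setting this is immediate because $(\psi_x,\psi_y)$ and $(u,v)$ are birational changes of coordinates and your eigen-combinations are linearly independent (here $m\neq\pm1$ since $p\neq q$). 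You might add one line to that effect.
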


\begin{proof}
When $\gamma_0^2+4\,\beta_0 \neq 0$ some calculations give that in
fact $H_1(f(x,y))=mH_1(x,y)$ and $H_2(f(x,y))=-mH_2(x,y).$
Furthermore $H_1(x,y),H_2(x,y)$ are generically transverse because
the determinant of the Jacobian of $H_1(x,y),H_2(x,y)$ is
$$-\frac{2p^2m(m^2-1)}{(p+x)^2(p+y)^2}$$
which is different from zero (if not $m^2=1$ and it happens if and
only if $p=q,$ which is in contradiction with $\gamma_0^2+4\,\beta_0
\neq 0$).

Also when $\gamma_0^2+4\,\beta_0=0$ the determinant of the Jacobian
of $K_1(x,y),K_2(x,y)$ is different from zero because it is equal
to:
$$\frac{16c^2}{(2y+c)^2(2x+c)^2}.$$
Finally $W(x,y)=(K_1(x,y))^2$ is a first integral integral of
$f(x,y)$ because $W(f(x,y))=(K_1(f(x,y)))^2=(-K_1(x,y))^2=W(x,y).$
\end{proof}

\begin{remark} Simple computations give that when $\gamma_0^2+4\,\beta_0 \neq 0,$  $f$ is birationally conjugated to $(mx,-my)$ via the conjugation
$\varphi(x,y)=(H_1(x,y),H_2(x,y))$ and that when
$\gamma_0^2+4\,\beta_0 = 0,$ $f$ is birationally conjugated to
$(-x,y+1)$ via $\psi(x,y)=(K_1(x,y),K_2(x,y)).$
\end{remark}

\end{document}